\DeclareMathOperator{\pic}{Pic}
\DeclareMathOperator{\jac}{Jac}
\DeclareMathOperator{\GL}{GL}
\DeclareMathOperator{\gl}{GL}
\DeclareMathOperator{\rk}{rk}
\DeclareMathOperator{\res}{res}
\newcommand{\dd}{\mathrm{d}}
\DeclareMathOperator{\codim}{codim}
\DeclareMathOperator{\gr}{Gr}
\newcommand{\p}{\Pp^1}
\newcommand{\pp}{\Pp^2}
\newcommand{\ppp}{\Pp^3}
\newcommand{\inv}{^{-1}}
\newcommand{\tos}{\twoheadrightarrow}
\newcommand{\toi}{\hookrightarrow}
\newcommand{\isoto}{\overset{\sim}{\to}}
\newcommand{\Cc}{\mathbb{C}}
\newcommand{\Ff}{\mathbb{F}}
\newcommand{\Nn}{\mathbb{N}}
\newcommand{\Qq}{\mathbb{Q}}
\newcommand{\Pp}{\mathbb{P}}
\newcommand{\Zz}{\mathbb{Z}}
\renewcommand{\H}{\mathrm{H}}
\newcommand{\PH}{\mathrm{PH}}
\newcommand{\cL}{\mathcal{L}}
\newcommand{\co}{\mathcal{O}}
\newcommand{\cq}{\mathcal{Q}}
\def\sfK{{\sf K}}                    %an extension of \k
\def\an{{\rm an}}
\def\prim{{\rm  prim}}                  %primitive part
\def\tI{\tilde{I}}
\def\Z{\mathbb{Z}}                   %Integer  numbers
\def\Q{\mathbb{Q}}                   %Rational  numbers
\def\C{\mathbb{C}}                   %Complex numbers
\def\N{\mathbb{N}}                   %natural numbers
\def\dR{{\rm dR}}                    %The subindex dR standing for de Rham cohomology.
\def\F{{\cal F}}                     %A foliation
\def\Tr{{\rm Tr}}                      %Trace map in Algebraic de Rham cohomology
\def\GL{{\rm GL}}                %The liner group
\def\k{{\sf k}}                     %Arbitrary field
\def\Gal{{\rm Gal}}              %The Galois group
\def\Ff{\mathbb{F}}                  %Finite field
\def\O{{\cal O}}                     %ring of integers of a number field
\def\gcd{{\rm gcd}}                  %greatest common divisor
\def\bark{\overline{\k}} % algebraic closure of \k
\renewcommand{\F}{\mathrm{F}} % filtration
\DeclareMathOperator{\Hdg}{Hdg}
\newcommand{\del}{\partial}
\newcommand{\emre}[1]{\strut{\color{red} $\clubsuit$}%
  \marginpar{\color{red}\footnotesize\raggedright  \textbf{Emre:} #1}}
\newcommand{\hossein}[1]{\strut{\color{blue} $\spadesuit$}%
  \marginpar{\color{blue}\footnotesize\raggedright  \textbf{Hossein} #1}}
\renewcommand{\emre}[1]{}
\renewcommand{\hossein}[1]{}
\newtheorem{theorem}{Theorem}
\newtheorem{example}[theorem]{Example}
\newtheorem{corollary}[theorem]{Corollary}
\newtheorem{question}[theorem]{Question}
\newtheorem{definition}[theorem]{Definition}
\newtheorem{lemma}[theorem]{Lemma}
\newtheorem{proposition}[theorem]{Proposition}
\newtheorem{remark}[theorem]{Remark}
\numberwithin{equation}{section}
\numberwithin{theorem}{section}
\numberwithin{figure}{section}
\numberwithin{algorithm}{section}
\begin{document}

\begin{center}
{\LARGE\bf On reconstructing subvarieties from their periods
}
\\
\vspace{.25in} {\large {\sc Hossein Movasati}} 
\footnote{
Instituto de Matem\'atica Pura e Aplicada, IMPA, Estrada Dona Castorina, 110, 22460-320, Rio de Janeiro, RJ, Brazil,
\url{www.impa.br/~hossein}, {\tt hossein@impa.br}}
 {\large and {\sc Emre Can Sert\"oz}}
\footnote{
  Max Planck Institute for Mathematics in the Sciences, MPI MiS, Inselstra{\ss}e 22, Leipzig 04103, Germany
  \url{www.emresertoz.com}, {\tt emre@sertoz.com}}
\end{center}
\begin{abstract}
We give a new practical method for computing subvarieties of projective hypersurfaces. By computing the periods of a given hypersurface $X$, we find algebraic cohomology cycles on $X$. On well picked algebraic cycles, we can then recover the equations of subvarieties of $X$ that realize these cycles. In practice, a bulk of the computations involve transcendental numbers and have to be carried out with floating point numbers. However, if $X$ is defined over algebraic numbers then the coefficients of the equations of subvarieties can be reconstructed as algebraic numbers. A symbolic computation then verifies the results.

As an illustration of the method, we compute generators of the Picard groups of some quartic surfaces. A highlight of the method is that the Picard group computations are proved to be correct despite the fact that the Picard numbers of our examples are not extremal.
\end{abstract}
\section{Introduction}

The Hodge conjecture asserts that on a smooth projective variety over $\Cc$, the $\Qq$-span of cohomology classes of algebraic cycles and of Hodge cycles coincide~\cite{Deligne-HodgeConjecture}. One difficulty of this conjecture lies in the general lack of techniques that can reconstruct algebraic cycles from their cohomology classes. In this article, we take a computational approach to this reconstruction problem and develop Algorithm~\ref{alg:ideal}. The highlight of this algorithm is its practicality. We have a computer implementation of the method that allows us to give rigorous Picard group computations, see Section~\ref{sec:conic}. 

Our approach parallels that of~\cite{griffiths-harris--IVHS2} where it is proven that the periods of a general curve in a high degree surface in $\ppp$ are sufficient to reconstruct the equations of the curve. Because period computations are expensive in practice, our intended applications are towards low degree hypersurfaces where the periods give partial, but substantial, information. The computation of the Picard rank of quartic surfaces in~Section~\ref{sec:implementation} is an example. On another note, we expect that one can experiment with reconstructing Hodge cycles in hypersurfaces where the Hodge conjecture is not known, using~\cite{sertoz18,lairez-sertoz} and the arguments here.

\subsection{Outline of the method}

For a smooth hypersurface $X \subset \Pp^{n+1}_{\Cc}$ of degree $d$, Griffiths residues~\cite{griffiths--periods} establishes a connection between homogeneous polynomials $p$ on $\Pp^{n+1}_\C$ and cohomology classes $\omega_p$ on $X$, see Section~\ref{sec:notation} for a precise statement.  Let $S_u$ be the space of degree $u \in \Zz$ homogeneous forms on $\Pp^{n+1}_\C$, with $S_u=0$ if $u<0$.  For a topological cycle $\delta \in \H_n(X,\Zz)$ and $u,v \in \Zz$ with $u+v = (n/2+1)d-n-2$ we consider the following pairing:
\begin{align}
  S_u \times S_v &\to \Cc, \\
  (p,q) &\mapsto  \int_\delta \omega_{pq}, \nonumber
\end{align}
and the associated map $\varphi_{\delta,u} \colon S_u \to S_v^\vee$ with kernel $\tilde I_{\delta,u}$. See Section~\ref{sec:implementation} and Algorithm~\ref{alg:ideal} in particular on how to compute $\tilde I_{\delta,u}$.

If $\delta=[Y]$ for a subvariety $Y \subset X$ then the ideal $\tilde I = \bigoplus_{u\in \Nn} \tilde I_{\delta,u}$ contains the ideal $I(Y)$ of polynomials vanishing on $Y$ (Proposition~\ref{prop:containment}). However, the inclusion can be strict. When $Y$ is a complete intersection in $\Pp^{n+1}_\C$ an observation of~\cite{ananyo17} implies that the ideals $\tilde I_{\delta}$ and $I(Y)$ coincide up to some explicit degree~$m$~(Corollary~\ref{cor:eqns}). This property is not limited to complete intersections, as we demonstrate by recovering the ideals of twisted cubics on quartic surfaces, see Sections~\ref{sec:perfect} and~\ref{sec:twisted_cubics_revisited}. We currently do not know how to compute $m$ \emph{a priori} --- unless $Y$ is a complete intersection --- but $m$ is often easy to find once $\tilde I$ is computed. 

\subsection{Applications to quartic surfaces}

Starting with the defining equation of the hypersurface $X$, we can numerically approximate its periods~\cite{sertoz18}, deduce Hodge cycles $\delta$ on the cohomology of $X$~\cite{lairez-sertoz}, and compute the associated ideals $\tilde I_\delta$~(Section~\ref{sec:implementation}). In this manner, it is now possible to construct \emph{rigorous} lower bounds for the group of Hodge cycles on $X$. In order to demonstrate the method, we prove that the following two quartic surfaces
\begin{align*}
  X &= Z(x^4 + x^3z - xy^3 + y^4 + z^4 + w^4) \subset \ppp_{\Cc},\\
  X &= Z(5x^4 - 4x^2zw + 8y^4 - 5z^4 + 4zw^3) \subset \ppp_{\Cc}
\end{align*}
have Picard numbers~$8$ and $14$ respectively~(Section~\ref{sec:conic}). 

The novelty of the method is that it bypasses the need to symbolically search for points in Hilbert schemes of $X$, which is prohibitively expensive to carry out with present day computers. Once the periods of $X$ are computed, simple linear algebra gives (an approximation of) the equations cutting out subvarieties on $X$. 

Explicit determination of algebraic cycles on the surface $X$ gives a lower bound on its Picard group. As our method places these classes inside the integral homology of $X$, we can saturate the lattice they generate. To claim that this saturation is the Picard group we need to find a (sharp) upper bound on the Picard number. We search for this complementary upper bound by reducing to finite characteristic, see~\cite{abbott-10} or~\cite{costa-sertoz}. 

% in lairez sertoz, almost everything had Picard groups generated by low degree smooth rational curves, therefore these lower bounds can be ..  % finding conics and twisted cubics etc are hard!! this could be seen as a method in effective algebraic geometry
%The problem of constructing equations of subvarieties in hypersurfaces can be expressed as solving a system of polynomial equations. For finding lines in surfaces, this system is small enough to solve using symbolic methods. But already determining conics in surfaces becomes a serious challenge in this way, let alone finding twisted curves. Our approach replaces the problem of solving these large systems of polynomials with performing linear algebra with periods.

\subsection{Perfect Hodge cycles}

This work prompts the question: Which algebraic subvarieties are reconstructible from their periods in general? A major step forward would be to determine if all the equations obtained from the periods of an algebraic cycle are caused by a representative of that cycle. 

The notion of a \emph{perfect} Hodge class given in Definition~\ref{def:perfect} makes this requirement precise. We demonstrate by way of example that knowing if a cycle is perfect has strong implications for the reconstruction step: Proposition~\ref{30july2019} states that if the class of a twisted cubic in a quartic surface is perfect then this twisted cubic can be reconstructed from its periods.

We ask in Question~\ref{q:perfect} if all algebraic classes are perfect, believing that an investigation of this problem will be fruitful.

\subsection{Fields of definition of algebraic cycles}

In the last section of this article we study smooth projective varieties in general, only with the assumption that they are defined over a subfield $\k \subset \Cc$ of the complex numbers. In this generality, we can not use Griffiths' basis for cohomology and our method does not generalize. Nevertheless, we want to facilitate the search for algebraic cycles with the aid of periods. 

We ask ``Given the periods of an algebraic subvariety $Y$ on $X$, what can we recover about $Y$?'' If $\k_Y/\k$ is the field generated by the periods of $Y$ over $\k$ then there exists an integer $m$ and an algebraic cycle $Z$ on $X$ defined over $\k_Y$ such that the cohomology class of $Z$ is equal to the cohomology class of $mY$~(Proposition~\ref{prop:vistachinesa2018}). In practice, having bounds on $m$ for which such a cycle $Z$ exists is desirable. Working with divisors $Y \subset X$ and assuming $\H^1(X,\co_X)=0$, we compute an explicit bound for $m$~(Theorem~\ref{thm:linear_system}). It follows that if, in addition, $H^2(X,\Zz)$ has no torsion and $X(\k)\ne \emptyset$ then we may take $m=1$. The point of this result is that it simplifies the search for explicit divisors whose classes generate the Picard group of a given variety.

The foundational results in the last section are well-known, but we hope our exposition, with an eye towards computation, will be of help in developing the techniques further.

%In case the periods of $Y$ are not readily accessible, we can give bounds on the degree of the field extension $\k_Y/\k$ that depend \emph{only on $X$}~(Lemma~\ref{lem:bound2}, Theorem~\ref{thm:upper_bound}). We have the following application: If $X \subset \ppp_\k$ is a smooth surface of degree $d$ with Picard number~$2$ then there exists a curve $C \subset X_\Cc$, defined over $\k'/\k $ with $[\k':\k] \le 2d$, such that $[C]$ and the polarization generate $\pic(X)$ (Proposition~\ref{prop:rk2_surface}).

\subsection{Notation}

Throughout this paper, $\k$ will be a subfield of the complex numbers $\C$. We will denote by $S=\k[x_0,\dots,x_{n+1}]$ the homogeneous coordinate ring of $\Pp^{n+1}_{\k}$ and by $S_u \subset S$ the subspace of degree $u \in \Zz$ homogeneous forms in $S$. For a smooth projective variety $X/\k$ the symbols $\H^m_{\dR}(X/\k)$, $\H^{p,q}_{\dR}(X/\k)$ refer to the algebraic de~Rahm cohomology of $X$~\cite{gro66}. The Hodge filtration on cohomology is denoted by $F^\ell \H^m_{\dR}(X/\k) \simeq \bigoplus_{p \ge \ell}\H^{p,m-p}$.  We will write $\H^m(X,\Zz)$, $\H^m(X,\Qq)$, $\H_m(X,\Zz)$ and $\H_m(X,\Qq)$ for the singular homology and cohomology groups of the underlying complex analytic variety $X^{\an}$ of $X$. The canonical pairing between a cohomology class and a homology class will be denoted by the integration symbol.

\subsection*{Acknowledgements}

We are grateful to Pierre Deligne for his detailed comments on an earlier draft of this paper which shaped the present version. We also benefited from our conversations with Daniele Agostini, Moritz Firsching, Uwe Nagel, Frank Schreyer and Bernd Sturmfels. The second author would like to thank IMPA for additional funding and hospitality during his visit.
We are indebted to the anonymous referee for giving valuable insight and contributing Proposition~\ref{sec:T_in_high_d}.

\section{Reconstructing equations of effective Hodge cycles}
Let $X \subset \Pp^{n+1}_\k$, $\k \subset \C$, be an even dimensional smooth hypersurface given by the zero set of a homogeneous polynomial $f\in S=\k[x_0,\dots,x_{n+1}]$, and $Y \subset X$ a subvariety of dimension~$n/2$. By \emph{periods of $Y$} we mean the following linear map:
\begin{equation}\label{eq:periods}
  \H_{\dR}^{n/2,n/2}(X/\k) \to \Cc : \omega \mapsto \int_Y \omega.
\end{equation}
In this section we construct an ideal $\tilde I_{[Y]}$ from the periods of $Y$ and study how much of the ideal of $Y$ can be recovered from $\tilde I_{[Y]}$. Combining this with previous work to compute periods, we find the equations for conics and twisted cubics inside quartic surfaces. We also ask if the ideal $\tilde I_{[Y]}$ is entirely geometric in origin, that is, if it is spanned by the ideals of subvarieties of $X$ whose primitive cohomology classes are proportional to that of $Y$. We demonstrate how such a result can be used (or falsified) with the example of a twisted cubic on a quartic surface.

A quotient $I_{\delta}$ of the ideal $\pi(\tI_\delta)$ appears in the literature in the context of the tangent spaces of the Hodge and Noether--Lefschetz loci~\cite[\S 6.2]{voisin-2007-volII}, \cite{ananyo17}, \cite{loyola18}, \cite[\S 11]{movasati-loyola--hodge2}. See Section~\ref{sec:first_consequences} for the relation between these two ideals.

\subsection{An ideal attached to Hodge cycles}\label{sec:notation}

Let $X=Z(f) \subset \Pp^{n+1}_\k$ be a hypersurface of degree $d$ with $n$ even. Let $h^{n/2} \in \H^{n}(X,\Zz)$ be the polarization on $X$, that is, the Poincar\'e dual of a $(n/2+1)$-plane section of $X$. For any ring $\sfK$ we will write $\PH^{n}(X,\sfK) \subset \H^{n}(X,\sfK)$ for the primitive cohomology groups, i.e., the space orthogonal to $h^{n/2}$. 

Denote by $S = \k[x_0,\dots,x_{n+1}]$ the coordinate ring of $\Pp^{n+1}_\k$ and for each $u \in \Zz$ let $S_u \subset S$ be the space of homogenous polynomials of degree $u$, with $S_u=0$ if $u <0$. For a variety $Y \subset \Pp^{n+1}_\k$ let $I(Y) \subset S$ be the ideal of $Y$. 
%Let $\vol_{\Pp^{n+1}} = \sum_{i=0}^{n+1} (-1)^ix_i \dd x_0 \dots \widehat{\dd x_i} \dots \dd x_{n+1} \in \H^0(\Pp^{n+1},\Omega_{\Pp^{n+1}/\k}(n+2))$ denote the standard volume form on $\Pp^{n+1}$, where hat denotes omission. 
Let $$N:=d-n-2$$ and for each $\ell \ge 0$ denote the Griffiths residue maps~\cite{griffiths--periods} as follows:
\begin{align}
  S_{N+\ell d} &\tos F^{n-\ell}\H^n_\dR(X)=\bigoplus_{i \le \ell} \PH^{n-i,i}(X), \\
  p &\mapsto \omega_p := \res \left( \frac{p}{f^{\ell+1}}
  %\vol_{\Pp^{n+1}}
 \Omega
  \right), \nonumber
\end{align}
where 
$$
\Omega:= \sum_{i=0}^{n+1} (-1)^ix_i \dd x_0 \dots \widehat{\dd x_i} \dots \dd x_{n+1}
$$
and hat denotes omission. In this paper we will identify Hodge cycles in homology and cohomology using Poincar\'e duality and write
$$
\Hdg^{n/2}(X)=\left\{
\delta\in \H_n(X,\Z) \Big| \int_{\delta}F^{n/2+1}\H^n_\dR(X)=0  
\right\}.
$$
%\subsection{Idea}
\begin{definition}\label{def:phi}
For each $u \in \Zz$ let $v=N+\frac{n}{2}d-u$ and, for each Hodge class $\delta \in \Hdg^{n/2}(X)$, consider the pairing:
\begin{equation}
  S_u \times S_v \to \Cc : (p,q) \mapsto \int_\delta \omega_{pq}.
\end{equation}
Expressed differently, we have a map $\varphi_{\delta,u} \colon S_u\otimes \Cc \to S_v^\vee \otimes \Cc$ and the kernel 
$$
\tI_{\delta,u} := \ker \varphi_{\delta,u}.
$$
Let $\tI_\delta = \bigoplus_{u \ge 0} \tI_{\delta,u} \subset S$ and observe that $\tI_\delta$ is an ideal. 
\end{definition}

\begin{proposition}\label{prop:containment}
  If $\delta = [Y]$ for a subvariety $Y \subset X$ then $I(Y) \subset \tilde I_{[Y]}$.
\end{proposition}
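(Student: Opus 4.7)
My plan is to show that for every $p \in I(Y)_u$ and every $q \in S_v$, the pairing $\int_{[Y]} \omega_{pq}$ vanishes. I would split the argument into a reduction to a claim in the Jacobian ring $R_f = S/J_f$, followed by verification of that claim via a local residue computation.

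For the reduction, I would first write $\int_{[Y]} \omega_{pq} = \int_Y i_Y^*\omega_{pq}$ with $i_Y \colon Y \hookrightarrow X$ the inclusion. Since $\dim_\C Y = n/2$, the Hodge decomposition forces $\H^n_\dR(Y) = \H^{n/2,n/2}(Y)$ and $F^{n/2+1}\H^n_\dR(Y) = 0$, so the pullback is controlled by the $(n/2,n/2)$-component of $\omega_{pq}$ modulo $F^{n/2+1}$. By the Griffiths residue construction, $\omega_{pq} \in F^{n/2}\PH^n_\dR(X)$, and under the isomorphism $R_f^{N+(n/2)d} \xrightarrow{\sim} \PH^{n/2,n/2}(X)$ this component is represented by $pq \bmod J_f$. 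Since $\omega_{pq}$ is already primitive, only the primitive part of $[Y]$ contributes, and under the Macaulay self-pairing on $R_f^{N+(n/2)d}$ the class $[Y]_{\prim}$ corresponds to some $\alpha_Y \in R_f^{N+(n/2)d}$; up to a nonzero constant,
$$\int_{[Y]} \omega_{pq} = \sigma(pq \cdot \alpha_Y),$$
where $\sigma \colon R_f^{(n+2)(d-2)} \to \Cc$ is the canonical trace. So the proposition reduces to the algebraic statement $I(Y)\cdot \alpha_Y = 0$ in $R_f$.

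To prove that algebraic statement, I would invoke the Leray tube formula (after a Griffiths--Dwork reduction of the integrand to simple-pole form) to rewrite the pairing as
$$\int_{[Y]} \omega_{pq} = \frac{1}{2\pi i}\int_{\tau[Y]} \frac{pq\,\Omega}{f^{n/2+1}},$$
where $\tau[Y] \subset \Pp^{n+1}\setminus X$ is the Leray coboundary (a circle bundle over $Y$ in the direction normal to $X$). Working locally near a smooth point of $Y$, I would choose coordinates $(w_1,\ldots,w_{n/2+1},z_1,\ldots,z_{n/2})$ such that $f = w_1$ is the local equation of $X$ and $I_Y$ is generated by $w_1,\ldots,w_{n/2+1}$. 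Expanding $p = \sum_i a_i w_i$ in these coordinates, the $i = 1$ summand cancels one power of $f$ and drops the polar order along $X$; for $i \geq 2$, the factor $w_i$ vanishes identically on $\tau[Y]$. Either way the local contribution to the tube integral is zero.

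The hard step will be promoting this local vanishing to a global statement. The decomposition $p = \sum a_i w_i$ lives only on charts with non-polynomial $a_i$, and the Griffiths--Dwork reduction necessary for the tube formula in higher polar order can mix these local pieces nontrivially. I expect to resolve this either by assembling the local vanishings globally via a Koszul-type resolution of the ideal sheaf $I_Y$, or by bypassing the tube picture entirely and establishing $I(Y)\cdot \alpha_Y = 0$ directly in $R_f$ using an iterative residue computation with an explicit representative of the fundamental class $\alpha_Y$.
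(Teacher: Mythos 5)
Your reduction step is sound and matches the paper's: it suffices to treat $p\in I(Y)$ of the single degree $N+\tfrac n2 d$, and since $[Y]$ is algebraic only the $(n/2,n/2)$-component of $\omega_{p}$ pairs nontrivially with it, which is the same as your reformulation via the Macaulay pairing on the Jacobian ring. The gap is in the vanishing argument itself. First, even locally your treatment of the $i=1$ term is unjustified: replacing $pq\,\Omega/f^{n/2+1}$ by $a_1q\,\Omega/f^{n/2}$ lowers the pole order, but lowering the pole order does not kill a local tube integral --- the circle integral of $a_1q/w_1^{n/2}$ over $|w_1|=\epsilon$ picks up the coefficient of $w_1^{n/2-1}$ in the expansion of $a_1q$, which has no reason to vanish. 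The genuine reason one may discard pole order $\le n/2$ is global and Hodge-theoretic: the residue of such a (global, closed) form lies in $F^{n/2+1}\H^n_{\dR}(X)$, which annihilates the class of the algebraic cycle $Y$; this is simply not available chart by chart, because the pieces $a_iw_iq\,\Omega/f^{n/2+1}$ are neither global nor closed. Second, and relatedly, the tube integral is not a sum of independent local contributions (any partition of unity destroys closedness), so the ``hard step'' you defer --- assembling the local vanishing into a global one via a Koszul resolution of $I_Y$, or computing with an explicit $\alpha_Y$ --- is exactly the content of the proposition, and neither route is carried out. As it stands the argument proves nothing beyond the (correct) reduction.

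For comparison, the paper avoids the tube picture entirely: by Carlson--Griffiths, the component $(\omega_{pq})^{n/2,n/2}$ has an explicit \v{C}ech representative in $\PH^{n/2}(X,\Omega^{n/2}_{X})$, namely $\bigl(pq\,\Omega_J/f_J,\;U_J\bigr)_{|J|=n/2+1}$ on the cover $U_j=\{\partial f/\partial x_j\neq 0\}$, in which the polynomial $pq$ appears as a \emph{global} factor. Since $p\in I(Y)$, this cocycle restricts to zero on $Y$, hence $\int_Y(\omega_{pq})^{n/2,n/2}=0$ with no local-to-global assembly needed. If you want to salvage your approach, the cleanest fix is to replace your local decomposition by this explicit global representative (or, equivalently, to work with a global Hodge-type argument for the lower-pole-order part); otherwise the Koszul-type patching you propose would in effect have to reprove the Carlson--Griffiths computation.
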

\begin{proof}
  Since $S_{<0}=0$, for any $u>N+\frac{n}{2}d$ we trivially have $\tilde I_{\delta,u}=S_u$. For $u\le N+\frac{n}{2}d$,  $p \in I(Y)_u$ and any $q \in S_v$ we have $pq \in I(Y)_{N+\frac{n}{2}d}$. Therefore, it will be sufficient to show that for any $p \in I(Y)_{N+\frac{n}{2}d}$ we have $\int_Y \omega_p = 0$.

  Since $\omega_p \in \F^{n/2}\H^n_\dR(X) = \bigoplus_{i \le n/2} \H^{n-i,i}(X)$ we can write $\omega_p = \sum_{i \ge n/2} (\omega_p)^{i,n-i}$ respecting the Hodge structure. Since $\delta=[Y]$ is algebraic, $\int_{\delta} \omega_p = \int_Y (\omega_p)^{n/2,n/2}$. As computed in \cite{carlson-griffiths}, the class $(\omega_p)^{n/2,n/2}$ admits a simple representation as a \v{C}eck cohomology class in $\PH^{n/2}(X,\Omega^{n/2}_{X/k})$. Let us write $U_j \subset X$ for the open locus where the $j$-th derivative $f_j := \del f/\del x_j$ does not vanish, and let $\iota_j$ be the operator which contracts forms by $\del/\del x_j$. Given a tuple $J=(j_0,\dots,j_q) \in \{0,\dots,n+1\}^{q+1}$ write $U_J = \cap_{j \in J}U_j$, $\iota_J = \iota_{j_q} \circ \dots \circ \iota_{j_0}$, $\Omega_J = \iota_J \Omega$ and $f_J = \prod_{j \in J} f_j$. We then have the following equivalence of \v{C}eck cocycles (modulo coboundary):
\begin{equation}\label{eq:top_form}
  (\omega_p)^{n/2,n/2} \equiv \left(p \frac{\Omega_J}{f_J}, U_J\right)_{|J| = n/2 +1}.
\end{equation}
When $p\in I(Y)$, the form representing $(\omega_p)^{n/2,n/2}$ will pullback to zero on $Y$. Therefore $\int_Y (\omega_p)^{n/2,n/2} = 0$.
\end{proof}

\subsection{First consequences}\label{sec:first_consequences}
Let $S = \k[x_0,\dots,x_{n+1}]$ be the coordinate ring of $\Pp^{n+1}_\k$ and  $\jac(f) = (f_0,\dots,f_{n+1}),\ \ f_i:=\frac{\partial f}{\partial x_i}$ be the Jacobian ideal of $f$, $R=S/\jac(f)$ and $\pi\colon S \to R$ the quotient map. We will write $R_u = \pi(S_u)$ and for each $\delta \in \Hdg^{n/2}(X)$ write $I_{\delta}=\pi(\tI_\delta)$.
\begin{lemma}\label{lem:ItoK}
  For any $\delta \in \Hdg^{n/2}(X)$ we have $\tI_\delta = \pi\inv(I_\delta)$.
\end{lemma}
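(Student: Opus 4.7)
The plan is to establish the two inclusions separately. The inclusion $\tI_\delta \subset \pi^{-1}(I_\delta)$ is tautological from the definition $I_\delta = \pi(\tI_\delta)$, since $A \subset \pi^{-1}(\pi(A))$ for any subset $A$ of $S$. The substantive content is the reverse inclusion $\pi^{-1}(I_\delta) \subset \tI_\delta$, which I would reduce to the single containment $\jac(f) \subset \tI_\delta$. Granting this, for any $p \in \pi^{-1}(I_\delta)$ one picks $q \in \tI_\delta$ with $\pi(p) = \pi(q)$, and then $p - q \in \ker \pi = \jac(f) \subset \tI_\delta$, giving $p = q + (p-q) \in \tI_\delta$.

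To show $\jac(f) \subset \tI_\delta$, since $\tI_\delta$ is an ideal it suffices to verify that each generator $f_i = \partial f/\partial x_i$ lies in $\tI_{\delta, d-1}$. Unpacking the definition, this amounts to checking that $\int_\delta \omega_{f_i q} = 0$ for every $q \in S_v$ with $v + (d-1) = N + \frac{n}{2}d$. The key input is Griffiths' pole-order reduction: because $f_i q$ lies in the Jacobian ideal, the meromorphic form $(f_i q)\,\Omega/f^{n/2+1}$ is cohomologous to one of strictly smaller pole order, so its residue $\omega_{f_i q}$ actually lies in the deeper step $F^{n/2+1}\H^n_\dR(X)$ of the Hodge filtration.

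The argument then concludes by invoking the hypothesis that $\delta$ is a Hodge class: by the very definition of $\Hdg^{n/2}(X)$ we have $\int_\delta F^{n/2+1}\H^n_\dR(X) = 0$, so $\int_\delta \omega_{f_i q} = 0$ and $f_i \in \tI_{\delta, d-1}$ as required.

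The subtlety worth flagging is precisely this reliance on $\delta$ being Hodge. One might naively hope that $\omega_p = 0$ in cohomology whenever $p \in \jac(f)$, but this fails for $\ell > 0$; Griffiths' reduction only guarantees that the class moves one step deeper in the Hodge filtration, not that it vanishes outright. The Hodge condition on $\delta$ is exactly what kills this deeper filtration step upon integration, so the lemma genuinely uses $\delta \in \Hdg^{n/2}(X)$ and would not hold for an arbitrary $\delta \in \H_n(X,\Zz)$.
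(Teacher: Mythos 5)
Your proof is correct and takes essentially the same route as the paper: both reduce the statement to the containment $\jac(f) \subset \tI_\delta$ and then use Griffiths' pole-order reduction --- equivalently, the fact that the residue map factors through the Jacobian ring, so that $\omega_p \in F^{n/2+1}\H^n_\dR(X)$ for $p \in \jac(f)_{N+\frac{n}{2}d}$ --- together with the defining property $\int_\delta F^{n/2+1}\H^n_\dR(X) = 0$ of Hodge classes. The only difference is cosmetic: you verify the generators $f_i$ and spell out the tautological inclusion, whereas the paper argues directly with elements of $\jac(f) \cap S_{N+\frac{n}{2}d}$.
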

\begin{proof}
  We need to show that $\jac(f) \subset \tI_\delta$. It will be sufficient to show that if $p \in \jac(f) \cap S_{N+\frac{n}{2}d}$ then $\omega_p$ annihilates $\delta$.  The residue map $\res \colon S_{N+\ell d} \to F^{n-\ell}\H^n_\dR(X)$ can be factored into an isomorphism taking the form $R_{n+\ell d} \isoto F^{n-\ell}\H^n_\dR(X)/F^{n-\ell+1}\H^n_\dR(X) : p \mapsto (\omega_p)^{n-\ell,\ell}$. Combine this observation with the fact that $F^{n/2+1}\H^n_\dR(X)$ annihilates $\Hdg^{n/2}(X)$.
\end{proof}

\noindent For $\delta \in \Hdg^{n/2}(X)$ write $\delta_\prim \in \PH^{n/2}(X,\Qq)$ for the primitive part of $\delta$. 

\begin{lemma}\label{lem:parallel_prim}
  For any $\delta, \delta' \in \Hdg^{n/2}(X)$ we have $I_\delta= I_{\delta'}$ if $\Qq\langle \delta_{\prim} \rangle = \Qq\langle \delta'_{\prim} \rangle$.
\end{lemma}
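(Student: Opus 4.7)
The plan is to reduce the claim, via Lemma~\ref{lem:ItoK}, to showing that $\tilde I_\delta = \tilde I_{\delta'}$, and then to exploit the fact that the Griffiths residue maps land in \emph{primitive} cohomology.

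First I would observe that, by definition of the residue map displayed above, $\omega_{pq}$ lies in $F^{n/2}\PH^n_\dR(X)$ for every $p \in S_u$, $q \in S_v$ with $u+v = N+\tfrac{n}{2}d$. Under Poincaré duality, any Hodge class $\delta \in \Hdg^{n/2}(X)$ decomposes as $\delta = \delta_\prim + c\, h^{n/2}$ for some $c \in \Qq$, and $h^{n/2}$ pairs trivially with primitive cohomology. Therefore
\begin{equation*}
\int_\delta \omega_{pq} \;=\; \int_{\delta_\prim} \omega_{pq},
\end{equation*}
so the bilinear pairing defining $\varphi_{\delta,u}$, and hence $\tilde I_{\delta,u}$, depends only on $\delta_\prim$.

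Next I would invoke the hypothesis $\Qq\langle \delta_\prim\rangle = \Qq\langle \delta'_\prim\rangle$. Either both primitive parts vanish, in which case $\varphi_{\delta,u} = \varphi_{\delta',u} = 0$ and both $\tilde I_\delta$ and $\tilde I_{\delta'}$ equal $S$; or there exists a nonzero $\lambda \in \Qq$ with $\delta_\prim = \lambda\, \delta'_\prim$, and then by linearity of the integration pairing $\varphi_{\delta,u} = \lambda\, \varphi_{\delta',u}$, so $\ker \varphi_{\delta,u} = \ker \varphi_{\delta',u}$ for every $u$. In either case $\tilde I_\delta = \tilde I_{\delta'}$, and applying $\pi$ gives $I_\delta = I_{\delta'}$.

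There is no real obstacle here; the only point that requires care is the assertion that $\omega_p$ is primitive, which is part of Griffiths' theorem on the residue description of the primitive cohomology of a hypersurface and is already implicit in the way the target of the residue map is written in Section~\ref{sec:notation}. Everything else is linearity of the integration pairing together with the triviality of the pairing between $h^{n/2}$ and primitive classes.
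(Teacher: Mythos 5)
Your argument is correct and is essentially the paper's own proof: the paper likewise notes that the residue classes $\omega_p$ annihilate the polarization $h^{n/2}$, so that only $\delta_{\prim}$ matters, and that the kernel of $p \mapsto \int_{\delta_{\prim}} \omega_p$ depends only on the line $\Qq\langle \delta_{\prim} \rangle$. Your explicit treatment of the case $\delta_{\prim}=\delta'_{\prim}=0$ and the passage through $\tilde I_\delta$ and $\pi$ are harmless elaborations of the same reasoning.
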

\begin{proof}
  Since every form $\omega_p$ annihilates the polarization $h^{n/2}$, we have $I_\delta=I_{\delta_\prim}$. On the other hand, the kernel of the linear form $p \mapsto \int_{\delta_{\prim}} \omega_p$ depends only on the line spanned by the class $\delta_{\prim}$.
\end{proof}

Suppose $Y \subset X$ is a complete intersection of dimension $n/2$ in $\Pp^{n+1}_\k$ and write $Y=Z(g_0,\dots,g_{n/2})$. Assuming that $d_i = \deg g_i < d$ we can find homogeneous forms $h_0,\dots,h_{n/2} \in S$ such that $f = g_0h_0 + \dots + g_{n/2}h_{n/2}$.

\begin{proposition}[Proposition~2.14~\cite{ananyo17}]\label{prop:ananyo}
  With $Y \subset X$ as above, we have
  \[
    I_{[Y]}=(\pi(g_0),\dots,\pi(g_{n/2}),\pi(h_0),\dots,\pi(h_{n/2})).
  \]
\end{proposition}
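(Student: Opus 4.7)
The plan is to verify both containments of the claimed equality in $R$. The easier direction is $(\pi(g_i), \pi(h_j)) \subseteq I_{[Y]}$. For the $g_i$ this is immediate: since $g_i \in I(Y)$, Proposition~\ref{prop:containment} gives $g_i \in \tI_{[Y]}$, hence $\pi(g_i) \in I_{[Y]}$. For the $h_j$, I would verify the defining property directly, showing that $\int_{[Y]}\omega_{h_j q}=0$ for every $q \in S_v$ with $v = N + \tfrac{n}{2}d - \deg h_j$. The key algebraic input is the identity obtained by applying $\del/\del x_i$ to $f = \sum_k g_k h_k$, which expresses each partial $f_i$ as a combination of the $g_k$, the $h_k$, and their derivatives. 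Since $f_i \equiv 0 \pmod{\jac(f)}$, this relation lets us modify the Carlson--Griffiths \v{C}ech representative~\eqref{eq:top_form} of $(\omega_{h_j q})^{n/2,n/2}$ by an explicit coboundary so that each resulting term contains a factor in $I(Y)$, and hence pulls back to zero on $Y$.

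For the harder direction $I_{[Y]} \subseteq (\pi(g_i), \pi(h_j))$, I would pass to the Jacobian ring via Lemma~\ref{lem:ItoK} and apply Macaulay duality. Set $\sigma := N + \tfrac{n}{2}d = (n+2)(d-2)/2$. The ring $R$ is Artinian Gorenstein with socle in degree $2\sigma$, so the perfect pairing $R_\sigma \times R_\sigma \to R_{2\sigma} \cong \Cc$ represents the linear functional $R_\sigma \to \Cc$, $p \mapsto \int_{[Y]} \omega_p$, by a single homogeneous element $\eta_{[Y]} \in R_\sigma$. Unwinding the definition of $I_{[Y]}$ degree by degree, and using that the composite $R_u \times R_v \to R_\sigma \to \Cc$ factors through Macaulay's pairing, yields $I_{[Y]} = \mathrm{Ann}_R(\eta_{[Y]})$.

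The main obstacle is the explicit identification of $\eta_{[Y]}$ and the verification that its annihilator equals $(\pi(g_i), \pi(h_j))$. I would first perform a Poincar\'e residue calculation on the complete intersection $Y$ (in the spirit of~\cite{carlson-griffiths}) to pin down $\eta_{[Y]}$ as an explicit polynomial in the $g_i$, the $h_j$, and their partial derivatives. A graded Hilbert-function count using the Koszul relations among the $n+2$ polynomials $(g_0, \ldots, g_{n/2}, h_0, \ldots, h_{n/2})$ modulo $\jac(f)$, together with the already-established containment, then forces the equality of the two ideals in every degree and completes the proof.
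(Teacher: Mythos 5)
First, note that the paper does not prove this proposition at all: it is imported verbatim from \cite{ananyo17} (Proposition~2.14 there; see also \cite{loyola18}), so your attempt must stand on its own. Your reduction is the standard and correct framework: via Lemma~\ref{lem:ItoK} and the Gorenstein pairing on $R$ (socle in degree $2\sigma$, $\sigma:=N+\tfrac{n}{2}d$), one gets $I_{[Y]}=\ann_R(\eta_{[Y]})$ for the element $\eta_{[Y]}\in R_\sigma$ representing the period functional. But the step you describe as ``pin down $\eta_{[Y]}$ by a Poincar\'e residue calculation'' is not an auxiliary verification --- it \emph{is} the theorem. The entire content of Proposition~\ref{prop:ananyo} is the identification, up to a nonzero constant, of the functional $p\mapsto\int_Y\omega_p$ on $S_\sigma$ with evaluation on the one-dimensional socle of the Artinian complete intersection $A=S/(g_0,\dots,g_{n/2},h_0,\dots,h_{n/2})$ (equivalently, $\eta_{[Y]}$ is a nonzero multiple of the class of the transition determinant expressing the $f_i$ in terms of the $g_k,h_k$); this is exactly the nontrivial residue/IVHS computation of \cite{ananyo17}, and you do not carry it out. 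Moreover, your closing claim that a ``graded Hilbert-function count using the Koszul relations \dots forces the equality'' is wrong as stated: once both containments $\pi(g_i),\pi(h_j)\in I_{[Y]}$ are known, multiplication by $\eta_{[Y]}$ factors through $A$, and equality of the two ideals is \emph{equivalent} to the nonvanishing $\int_Y\omega_s\neq 0$ for a lift $s\in S_\sigma$ of the socle generator of $A$ (equivalently, given the containment, to $[Y]_\prim\neq 0$). No Hilbert-function numerology supplies this nondegeneracy; it requires either the explicit period evaluation you skipped or a separate geometric argument.

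There is also a smaller gap in the ``easy'' direction. Your argument that $\pi(h_j)\in I_{[Y]}$ by ``modifying the Carlson--Griffiths cocycle by an explicit coboundary'' using $f_i=\sum_k(g_k\partial h_k/\partial x_i+h_k\partial g_k/\partial x_i)$ is unsubstantiated: on $Y$ the representative $h_jq\,\Omega_J/f_J$ does not visibly vanish, and you never exhibit the coboundary. The clean route, recorded in the paper in the example following Definition~\ref{def:perfect}, is geometric: set $Y_j=Z(g_0,\dots,g_{j-1},h_j,g_{j+1},\dots,g_{n/2})\subset X$, observe $Y\cup Y_j=X\cap Z(g_0,\dots,\widehat{g_j},\dots,g_{n/2})$, hence $[Y_j]_\prim=-[Y]_\prim$, and then apply Lemma~\ref{lem:parallel_prim} together with Proposition~\ref{prop:containment} to $h_j\in I(Y_j)$. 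I would suggest replacing your cocycle manipulation by this argument, and then either citing \cite{ananyo17,loyola18} for the socle identification or proving directly that $[Y]_\prim\neq 0$, which, as explained above, is all that remains once both containments are in place.
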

\begin{corollary}
\label{29july2019}
We have
 \[
\tI_{[Y]} = (g_0,\dots,g_{n/2},h_0,\dots,h_{n/2},f_0,\dots,f_{n+1}).
\]
\end{corollary}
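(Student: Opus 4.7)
The plan is to combine the two results that immediately precede the corollary, namely Lemma~\ref{lem:ItoK} and Proposition~\ref{prop:ananyo}. By Lemma~\ref{lem:ItoK} we have $\tI_{[Y]} = \pi^{-1}(I_{[Y]})$, and by Proposition~\ref{prop:ananyo} the ideal $I_{[Y]} \subset R$ is generated by the classes $\pi(g_0),\dots,\pi(g_{n/2}),\pi(h_0),\dots,\pi(h_{n/2})$. So the corollary reduces to a standard fact about preimages of ideals under quotient maps.

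Concretely, for the surjection $\pi \colon S \to R = S/\jac(f)$ with kernel $\jac(f) = (f_0,\dots,f_{n+1})$, and for any ideal $J = (\pi(a_1),\dots,\pi(a_r)) \subset R$, one has the equality
\[
\pi^{-1}(J) = (a_1,\dots,a_r) + \ker(\pi) = (a_1,\dots,a_r,f_0,\dots,f_{n+1}).
\]
The containment $\supseteq$ is immediate since each $a_i$ maps into $J$ and $\jac(f)$ maps to zero. For $\subseteq$, if $p \in S$ satisfies $\pi(p) \in J$ then we can write $\pi(p) = \sum_i \pi(b_i)\pi(a_i)$ for some $b_i \in S$, so that $p - \sum_i b_i a_i \in \ker(\pi) = \jac(f)$, placing $p$ in $(a_1,\dots,a_r,f_0,\dots,f_{n+1})$.

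Applying this observation to the generators supplied by Proposition~\ref{prop:ananyo} yields
\[
\tI_{[Y]} = \pi^{-1}(I_{[Y]}) = (g_0,\dots,g_{n/2},h_0,\dots,h_{n/2},f_0,\dots,f_{n+1}),
\]
which is exactly the claim. There is no real obstacle here; the only thing to be careful about is to cite the Jacobian ideal correctly as the kernel of $\pi$ so that the preimage computation is clean.
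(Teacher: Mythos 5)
Your proposal is correct and follows the same route as the paper, which simply cites Proposition~\ref{prop:ananyo} and Lemma~\ref{lem:ItoK}; you have only filled in the routine fact that the preimage of an ideal under the quotient $\pi\colon S \to S/\jac(f)$ is generated by lifts of its generators together with $f_0,\dots,f_{n+1}$.
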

\begin{proof}
This follows from Proposition~\ref{prop:ananyo} and Lemma~\ref{lem:ItoK}.
\end{proof}
\begin{corollary}\label{cor:eqns}
  If there exists an $m \in \N$ such that for all $i$, $d_i < d - m$ then $\tI_{[Y],\le m} = I(Y)_{\le m}$. 
\end{corollary}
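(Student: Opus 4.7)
The plan is to combine the explicit generating set for $\tI_{[Y]}$ from Corollary~\ref{29july2019} with a direct degree count. By that corollary,
$$\tI_{[Y]} = (g_0,\dots,g_{n/2},h_0,\dots,h_{n/2},f_0,\dots,f_{n+1}),$$
so the question reduces to understanding which of these generators can possibly contribute to an element of degree $\leq m$.

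First I would record the key degree bounds. Since $d_i < d-m$ and degrees are integers, $\deg h_i = d - d_i > m$ for every $i$. Moreover, because the $g_i$ cut out a nonempty proper subvariety $Y \subset \Pp^{n+1}_\k$, each $d_i \geq 1$, so $d - 1 \geq d_i + m > m$, giving $\deg f_j = d-1 > m$ for every $j$. Thus every generator of $\tI_{[Y]}$ other than the $g_i$ lives strictly above degree $m$.

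Next I would argue the nontrivial inclusion $\tI_{[Y],\le m} \subseteq I(Y)_{\le m}$ as follows. A homogeneous element $p \in \tI_{[Y]}$ of degree $\leq m$ can be written as $p = \sum a_i g_i + \sum b_j h_j + \sum c_k f_k$, with each summand homogeneous of degree $\deg p$; since $\deg h_j, \deg f_k > m \geq \deg p$, we would need $\deg b_j, \deg c_k < 0$, forcing $b_j = c_k = 0$. Hence $p \in (g_0,\dots,g_{n/2}) = I(Y)$. The reverse inclusion $I(Y)_{\le m} \subseteq \tI_{[Y],\le m}$ is immediate from Proposition~\ref{prop:containment}, and combining the two yields the equality.

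I do not foresee a real obstacle: once Corollary~\ref{29july2019} is in hand, the result is a purely degree-theoretic consequence. The only minor point to confirm is that $d_i \geq 1$, which is used to deduce $\deg f_j > m$ and follows from $Y$ being a proper, nonempty complete intersection.
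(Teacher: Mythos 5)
Your proof is correct and follows essentially the same route as the paper: invoke Corollary~\ref{29july2019} and then eliminate the generators $h_i$ and $f_j$ by the degree bounds $\deg h_i = d-d_i > m$ and $\deg f_j = d-1 > m$. The paper's proof is just a terser version of the same degree count (it leaves implicit the observation $d_i\ge 1$ that you spell out).
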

\begin{proof}
Since $\deg h_i = d-d_i$ and $\deg f_i = d-1$ the result follows from Corollary \ref{29july2019}. 
\end{proof}

\begin{example}\rm
  Suppose $Y \subset X$ is an $\frac{n}{2}$-plane and $\deg X \ge 3$. Then, we can recover the equations cutting out $Y$ from its Hodge class, since Corollary~\ref{cor:eqns} implies $I(Y)_1 = \tI_{[Y],1}$.
\end{example}

\begin{example}\rm
  Suppose $n=2$ and $C \subset X \subset \ppp_\k$ is a smooth conic ($d_1=1,\ d_2=2$). If $d \ge 5$ then by Corollary~\ref{cor:eqns} the ideal cutting out $C$ is generated by $\tI_{[C],\le 2}$. However, if $d=4$ then $\dim \tI_{[C],1}=1$ but $\dim \tI_{[C],2}=6$ although $\dim I(C)_2 = 5$. The interference with the quadrics is predicted by Lemma~\ref{lem:parallel_prim}: if $H$ is the plane containing $C$ then $H \cap X = C \cup C'$, where $C'$ is a possibly singular conic with $[C]_{\prim}=-[C']_{\prim}$. Indeed, Proposition~\ref{prop:ananyo} implies $\tI_{[C],2} = I(C)_2 + I(C')_2$.
\end{example}

\subsection{Perfect Hodge classes and the twisted cubic}\label{sec:perfect}

Lemma~\ref{lem:parallel_prim} gives a natural source for polynomials appearing in $\tilde I_{\delta}$. We distinguish the Hodge classes for which this is the only source.

\begin{definition}\label{def:perfect}
  A Hodge class $\delta\in \Hdg^{n/2}(X)$ is called \emph{perfect at level $m$} if there exists $Y_1,\dots,Y_s \subset X$ of codimension $n/2$ such that $\Qq\langle [Y_i]_{\prim} \rangle = \Qq\langle \delta_{\prim} \rangle$ and $I_{\delta,\le m} = \sum_{i=1}^s \pi(I(Y_i)_{\le m})$. If $I_{\delta} = \sum_{i=1}^s \pi(I(Y_i))$ then $\delta$ is \emph{perfect}.
\end{definition}

\begin{example}\rm
  Hodge classes supported on a complete intersection variety $Y=Z(g_0,\dots,g_{n/2})$ with $\deg g_i < d$ are perfect. In fact, the inclusion of the elements $g_i,h_i \in I_\delta$ in Proposition~\ref{prop:ananyo} uses the following observation: Let $Y_i=Z(g_0,\dots,g_{i-1},h_i,g_{i+1},\dots,g_{n/2}) \subset X$ and note that $Y \cup Y_i = X \cap Z(g_0,\dots,g_{i-1},g_{i+1},\dots,g_{n/2})$. This implies that $[Y]_{\prim} = -[Y_i]_{\prim}$. Now apply Lemma~\ref{lem:parallel_prim}.
\end{example}

\begin{question}\label{q:perfect}
  Are all algebraic Hodge classes perfect?
\end{question}

If the answer to this question is negative, it would be beneficial to have a general principle to detect if a given effective algebraic class is perfect. As we demonstrate below, it would then be possible to determine if an algebraic subvariety representing a given perfect class can be reconstructed from its periods.

\subsubsection{Twisted cubics in quartic surfaces}

We now consider the simplest non-trivial non-complete intersection: a twisted cubic $T$ in a smooth quartic surface $X \subset \ppp_\k$. We prove below that $T$ is perfect at level $2$ if and only if $I(T)$ can be recovered from $\tilde I_{[T]}$ (i.e. $\dim \tilde I_{[T],2} = 3$). %For all the examples considered in Section~\ref{sec:implementation} we could recover the quadrics vanishing on such twisted cubics. This suggests that the two equivalent hypotheses in the following proposition should hold in general.

\begin{proposition}
\label{30july2019}
  Let $T$ be a twisted cubic in a smooth quartic surface $X \subset \ppp_\k$.  The class $[T]$ is perfect at level $2$ if and only if $\tilde I_{[T],1}=0$ and $\dim \tilde I_{[T],2}=3$.
\end{proposition}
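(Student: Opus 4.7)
The plan is to leverage Proposition~\ref{prop:containment} ($I(T) \subset \tilde I_{[T]}$) together with the fact that a twisted cubic is cut out by three independent quadrics, so $I(T)_1 = 0$ and $\dim I(T)_2 = 3$. For the $(\Leftarrow)$ direction the argument is immediate: if $\tilde I_{[T],1}=0$ and $\dim \tilde I_{[T],2}=3$, the containment of Proposition~\ref{prop:containment} forces $I(T)_{\le 2} = \tilde I_{[T], \le 2}$. Since $\jac(f)$ is generated in degree $d-1 = 3$, the projection $\pi\colon S \to R$ is an isomorphism in degrees $\le 2$, so $I_{[T], \le 2} = \pi(I(T)_{\le 2})$, and taking $s = 1$, $Y_1 = T$ in Definition~\ref{def:perfect} witnesses perfection at level~$2$.

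For the $(\Rightarrow)$ direction, because $\pi$ is an isomorphism in degrees $\le 2$, perfection transports to the identity $\tilde I_{[T], \le 2} = \sum_i I(Y_i)_{\le 2}$ in $S$. I would then classify every irreducible $Y \subset X$ with $[Y]_{\prim}$ a nonzero rational multiple of $[T]_{\prim}$ and verify $I(Y)_{\le 2} \subset I(T)_{\le 2}$ in each case. Writing $[Y] = \lambda[T] + \mu h \in \NS(X)$ with $\lambda \neq 0$, adjunction on the K3 surface $X$ (so $Y^2 = 2 p_a(Y) - 2$) combined with $[T]_{\prim}^2 = T^2 - (T \cdot h)^2/h^2 = -17/4$ produces the Diophantine identity $(\deg Y)^2 = 17\lambda^2 + 8 p_a(Y) - 8$. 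For planar $Y$ (so $\deg Y \le 4$ and $p_a = \binom{\deg Y - 1}{2}$) this admits no solution with $\lambda \neq 0$, hence $I(Y_i)_1 = 0$ for each $Y_i$ in the decomposition and $\tilde I_{[T], 1} = \sum_i I(Y_i)_1 = 0$.

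For the degree-$2$ piece I would use the identification $\dim I(Y)_2 = h^0(\mathcal{O}_X(2h - [Y]))$ coming from the sequence $0 \to \mathcal{O}_X(-Y) \to \mathcal{O}_X \to \mathcal{O}_Y \to 0$ twisted by $\mathcal{O}_X(2h)$: a quadric through $Y$ exists iff the residual class $R = 2h - [Y] = -\lambda[T] + (2-\mu) h$ is effective on $X$. A short case analysis of the Diophantine list (using that irreducible effective classes on a K3 satisfy $L^2 \ge -2$, and that effective classes with $L^2 < -2$ require $L \cdot h \ge 2$) shows that $R$ is effective only when $(\lambda, \mu) \in \{(1, 0),(-1, 2)\}$: either $Y = T$ with $R$ the degree-$5$ link curve, or $Y$ is that degree-$5$ curve with $R$ in class $[T]$. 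Rigidity of the $(-2)$-curve $T$ yields $h^0(\mathcal{O}_X(T)) = 1$, so in the second case the residual of any quadric through $Y$ equals $T$ as a divisor, whence every such quadric also vanishes on $T$ and $I(Y)_2 \subset I(T)_2$. Perfection then forces $\tilde I_{[T], \le 2} = \sum_i I(Y_i)_{\le 2} \subset I(T)_{\le 2}$, and the reverse inclusion is Proposition~\ref{prop:containment}, giving $\dim \tilde I_{[T], 2} = 3$.

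The main obstacle is the residual bookkeeping in the third paragraph: one must verify systematically, using the Diophantine relation and the structure of the effective cone, that the two cases above exhaust the irreducible $Y$ with $[Y]_{\prim} \propto [T]_{\prim}$ whose residual $2h - [Y]$ is effective on a smooth quartic K3, and rule out exotic reducible representatives that might appear on surfaces of higher Picard rank.
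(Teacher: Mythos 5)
Your ``if'' direction is exactly the paper's (immediate from Proposition~\ref{prop:containment} and $\dim I(T)_2=3$), and your reduction of perfection at level~$2$ to the identity $\tI_{[T],\le 2}=\sum_i I(Y_i)_{\le 2}$ in $S$ is legitimate since $\jac(f)$ starts in degree~$3$. Your Diophantine identity $(\deg Y)^2=17\lambda^2+8p_a(Y)-8$ is a correct repackaging of the comparison of $[Y]_\prim^2$ with $[T]_\prim^2=-17/4$ that the paper also uses, and it does dispose of the degree-$1$ statement. The route you propose for degree~$2$ (intrinsic on the K3, via $\dim I(Y)_2=h^0(\co_X(2h-Y))$, the effective cone, and rigidity of $T$) is genuinely different from the paper's organization, which instead fixes a quadric $Q=Z(q)$ with $q\in\tI_{[T],2}$ not vanishing on $T$ and classifies curves inside $Q\cap X$ case by case ($Q$ smooth, a cone via its Hirzebruch resolution, two planes, a double plane), using adjunction on $Q$ and the bound $0\le C\cdot T\le 6$.

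However, there is a genuine gap, and it is exactly what you defer in your last paragraph. First, the decisive claim --- that $2h-[Y]$ is effective only for $[Y]\in\{[T],\,2h-[T]\}$ --- is asserted, not proved; even for irreducible $Y$ the Diophantine list leaves the cases $(\lambda,\deg Y)=(1,5)$ and $(\pm 1,7)$, which must be killed by integrality of $Y\cdot T$ (equivalently the paper's bound $C\cdot T\in\{0,\dots,6\}$) and by noting that a degree-$7$ curve forces a residual line $L$ with $[L]_\prim$ proportional to $[T]_\prim$, already excluded. Second, and more seriously, restricting to irreducible $Y_i$ is not warranted by Definition~\ref{def:perfect} as the paper reads it: the paper's proof explicitly allows the curve in the quadric to be reducible and even non-reduced (reducible bidegree-$(a,b)$ curves on a smooth quadric, unions of plane curves in two distinct planes, multiplicity structures on a double plane). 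For such effective divisors $p_a(Y)$ can be negative, so your list genuinely grows --- e.g.\ $\lambda=\pm 2$, $\deg Y=6$, $p_a=-3$ (realized by $2T$) passes the numerical test --- and the inequality $L^2\ge -2$ you invoke applies to irreducible classes only, not to $Y$ or its residual. Until these reducible and non-reduced representatives are systematically excluded, the ``only if'' direction remains a plausible plan rather than a proof; completing it would require either the case analysis the paper performs on the type of the quadric, or a careful extension of your lattice argument to arbitrary effective decompositions.
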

\begin{proof}
  Since $I(T)_1 = 0$ and $\dim I(T)_2 = 3$ the ``if'' implication is immediate. We now assume that $[T]$ is perfect and compute $\tilde I_{[T],m}$ for $m=1,2$.

  We begin with $m=1$. If $\tilde I_{[T],1}$ is non-zero then (by hypothesis) there exists a curve $C \subset X$ lying in a plane $Z(h)$, $h \in \tilde I_{[T],1}$, such that $\Qq\langle [C]_{\prim}  \rangle = \Qq\langle [T]_{\prim} \rangle$. Since $[C]_{\prim} \neq 0$, $[C]$ is either a line, a (possibly singular) conic or an elliptic curve. The self intersection numbers of $[C]$ in each case are $-2$, $-2$ and $0$ respectively. Since $[C]_{\prim} = [C] - \frac{d}{4}h$, where $d$ is the degree of $C$, we have $[C]_{\prim}^2 = -9/4,\, -3,\, -9/4$ respectively. Whereas, $[T]_{\prim}^2 = -2-3^2/4=-17/4 $. Since $a[T]_{\prim} = b[C]_{\prim}$ for non-zero integers $a,b$ the quotient $[C]_{\prim}^2/[T]_{\prim}^2 \in \{12/17, 9/17\}$ must be the square of a rational number. As this is not the case, we conclude $\tilde I_{[T],1}=0$.

  Now we consider $m=2$. If $\dim \tilde I_{[T],2} > 3$ then there exists a curve $C \subset X$, possibly with non-reduced components, lying in a quadric surface $Q=Z(q)$ such that $q$ does not vanish on $T$, $q \in \tilde I_{[T],2}$ and  $\Qq\langle [C]_{\prim}  \rangle = \Qq\langle [T]_{\prim} \rangle$. Note that we are not free to pick any $q$ in $\tilde I_{[T],2} \setminus I(T)_2$ and so we may not assume $Q$ is smooth.

  \emph{Case I: $Q$ is smooth.} Now $C \in Q \simeq \p \times \p$ is of bidegree $(a, b)$ for some $1 \le a \le b \le 4$. Note $(a,b) \neq (1,1)$ or $(4,4)$, because the former would imply that $C$ is contained in a plane and the latter would imply $[C]_{\prim}=0$. Since both $X$ and $Q$ are smooth the curve $C$ is a local complete intersection in both, therefore the dualizing sheaf $\omega_{C/k}$ exists and can be computed by adjunction on either $X$ or $Q$~\cite[\S 6.4.2]{liu02}. In particular, equating the two expressions for the degree of $\deg \omega_{C/k}$ by computing it on $X$ and on $Q$ respectively we obtain  $[C]^2=b(a-2)+a(b-2)$. Since the degree of $C$ is $a+b$ we find $[C]_{\prim}^2 =  b(a-2)+a(b-2) - (a+b)^2/4$. As before, the quotient $[C]_{\prim}^2/[T]_{\prim}^2$ must be the square of a rational number. In the given range of possibilities of $a,b$ this can only be realized if $(a, b) = (1,2)$ or $(2,3)$ and in both cases $[C]_{\prim}^2 = -17/4 = [T]_{\prim}^2$. As these primitive classes are proportional, we conclude $[C]_{\prim} = \pm [T]_\prim$ and $[C] = \pm [T]_{\prim} + \frac{a+b}{4} h$. Since $C$ is contained in a quadric not containing $T$, the intersection number $C \cdot T$ must be an integer between 0 and 6. For $(a,b) = (1,2)$ or $(2,3)$ this is not the case. Therefore, no such $C$ exists.  

  \emph{Case II: $Q$ is an irreducible cone.} Pass to the desingularization $\tilde Q \to Q$ and let $\tilde C \subset \tilde Q$ be the proper transform of $C$. This time we have $\deg \omega_{\tilde C} \le \deg \omega_{C} = [C]^2$ and we can compute the left hand side by adjunction on $\tilde Q$. The surface $\tilde Q$ is a Hirzebruch surface of degree 2, we refer to \cite[\S V.2]{hartshorne} for general properties of such surfaces. We have $\pic (\tilde Q) = \Zz\langle e,f \rangle$ where $e^2=-2, e\cdot f=1, f^2=0$. Here $e$ represents the class of the exceptional divisor $E$ and $f$ the class of the proper transform of a line in $Q$. Let us write $\tilde c :=[\tilde C] = ae + bf$, where $b$ equals the degree of $C$ in $\ppp_\k$. 
  
  Since $\tilde C$ has no component supported on $E$ the relations $e\cdot \tilde c , f \cdot \tilde c \ge 0$ imply $b \ge 2a \ge 0$. The class of a plane section is $h=e+2f$ and since $C$ is a component of $X \cap Q$, the class $4h-\tilde c$ must also be effective. These give the additional constraints $a \le 4$ and $b\le 8$. We may assume $C$ is not planar, and since any curve of degree $\le 2$ in $Q$ is planar we conclude $b >2$. Furthermore, $C\neq X \cap Q$ thus $b < 8$. Moreover, if $b=7$ then $C$ differs from being the complete intersection $X\cap Q$ by the class of a line $L$. Thus $[L]_{\prim}$ is proportional to $[T]_{\prim}$ in $X$, which is a contradiction. This ensures $b < 7$. 
  
  Now $\deg \omega_{\tilde C} = -2(a^2 - ab + b) \le [C]^2$ and $\deg C = b$ so that $-2(a^2 - ab + b)-b^2/4  \le [C]_{\prim}^2 < 0$. We see that there are no pairs $(a,b)$ in the specified range which allows for $[C]_{\prim}^2/[T]_{\prim}^2$ to be the square of a rational number and for $[C] \cdot [T] \in \{0,\dots,6\}$.

  \emph{Case III: $Q$ is the union of two distinct planes.} This time $C=C_1+C_2$ with $C_i$'s lying in different planes and with $C$ not planar. Since each $C_i$ is a plane curve of degree $1$, $2$ or $3$ we can readily compute the possible range of values of $([C]^2,\deg(C))$. None of these pairs give a primitive class proportional to $[T]_{\prim}$.

  \emph{Case IV: $Q$ is a double plane.} Now $[C]$ must be the sum of components of $H \cap X$ for a plane $H$, but with multiplicities between $0,1,2$. As we can always pass to $2[H \cap X]-[C]$ without changing the span of the primitive part of $[C]$, we can reduce to the planar case unless $H \cap X$ has at least three components and all three coefficients $0,1,2$ must appear in their sum $[C]$. There are only three cases to consider and they are all eliminated as above.
\end{proof}

\begin{remark}\rm
  Sampling tens of thousands of quartics containing a twisted cubic, we observed that the class $[T]$ is perfect at level~$2$ on all of these examples. See~Section~\ref{sec:many_examples}.
\end{remark}

\subsubsection{Twisted cubics in higher degree surfaces}

It is clear from Definition~\ref{def:phi} that we can find more relevant equations as the degree of the of the surface increases. This suggests that it should be easier to reconstruct curves of fixed degree and genus as we view them in higher degree surfaces. This is also the nature of the result~\cite[Corollary 4.a.8]{griffiths-harris--IVHS2}. We now observe this phenomenon with twisted cubics, giving a specific bound on the degree on the surface for reconstructability.

\begin{proposition}\label{sec:T_in_high_d}
 Let $T \subset X$ be a twisted curve in a smooth surface $X=Z(f) \subset \ppp$ of degree~$d\ge 5$. The ideal of $T$ can be reconstructed from its periods in $X$. 
\end{proposition}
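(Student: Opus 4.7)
The goal is to prove $\tilde{I}_{[T],2} = I(T)_2$, from which the ideal of $T$ is recovered since the homogeneous ideal of a twisted cubic in $\ppp$ is generated by its three linearly independent quadrics. By Proposition~\ref{prop:containment} the inclusion $I(T)_2 \subseteq \tilde{I}_{[T],2}$ is automatic, so the content is to prove $\dim \tilde{I}_{[T],2} \le 3$.

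My plan has three steps. First I would translate the defining condition of $\tilde I_{[T],2}$ into an annihilator condition inside the Jacobian ring $R = S/\jac(f)$. Because $d \ge 5$ the Jacobian ideal vanishes in degrees $\le d-2$, so $R_2 = S_2$ and $R_{2d-6}$ is the image of $S_{2d-6}$. Using the Griffiths isomorphism $R_{2d-4} \cong \PH^{1,1}(X)$ together with the perfect Macaulay pairing $R_{2d-4}\times R_{2d-4}\to R_{4d-8}\cong \Cc$, the class $[T]_{\prim}$ is represented by an element $\tau_T \in R_{2d-4}$, and the condition $\int_T \omega_{pq} = 0$ for all $p$ becomes $q\cdot \tau_T = 0$ in $R_{2d-2}$. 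One obtains the identification
\[
\tilde{I}_{[T],2}=\ker\bigl(\cdot\tau_T \colon R_2 \to R_{2d-2}\bigr),
\]
so the problem reduces to showing the degree-$2$ annihilator of $\tau_T$ equals $I(T)_2$.

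Second, I would re-run the case analysis of Proposition~\ref{30july2019} for general $d \ge 5$, assuming momentarily that $[T]$ is perfect at level $2$. Namely, if $q\in \tilde{I}_{[T],2}\setminus I(T)_2$ then $Q=Z(q)$ is a quadric that does not contain $T$, and perfection supplies an effective cycle $C \subset X \cap Q$ with $[C]_{\prim}$ proportional to $[T]_{\prim}$. I would impose the numerical constraints that $[C]_{\prim}^2/[T]_{\prim}^2$ is a rational square and that $[C]\cdot [T]$ is a non-negative integer bounded by $[T]\cdot 2h = 6$. Using the computation $[T]_{\prim}^2 = -3d+10-9/d$ and adjunction on both $Q$ and $X$, one enumerates according to whether $Q$ is smooth, a quadric cone, a union of two planes, or a double plane. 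For $d\ge 5$ the inequalities are strictly more restrictive than in the $d=4$ case treated in Proposition~\ref{30july2019}, so each case should be eliminated by the same bidegree and intersection-number analysis.

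The hard part of the argument will be justifying that $[T]$ is indeed perfect at level $2$ when $d \ge 5$, since Question~\ref{q:perfect} is open in general and the numerical casework has nothing to act on otherwise. To establish this I would combine Lemma~\ref{lem:parallel_prim} with the residual decomposition $X\cap Q = T \cup T_Q$ for smooth quadrics $Q \supset T$, which gives $[T_Q]_{\prim} = -[T]_{\prim}$ and provides a supply of effective cycles with primitive class parallel to $[T]_{\prim}$. The idea is to show, via the surjectivity of the multiplication map $R_2 \otimes R_{2d-6} \to R_{2d-4}$ available for $d\ge 5$ by standard results on Macaulay duality in Jacobian rings, that the image $\tau_T \cdot R_2 \subset R_{2d-2}$ is spanned by the contributions of such geometric residuals. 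Once perfection at level $2$ is in hand, the case analysis above closes the proof and the reconstruction of $I(T)$ from the periods of $T$ follows.
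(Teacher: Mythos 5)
Your reduction of the problem to showing $\dim\tilde I_{[T],2}\le 3$ (plus the fact that the quadrics generate the ideal of a twisted cubic) and your identification $\tilde I_{[T],2}=\ker\bigl(\cdot\tau_T\colon R_2\to R_{2d-2}\bigr)$ via Macaulay duality are both correct and consistent with the paper's framework. The problem is that everything after that hinges on $[T]$ being perfect at level $2$ for every smooth surface of degree $d\ge 5$, and you do not prove this; it is precisely the kind of statement the paper leaves open (Question~\ref{q:perfect}), and your sketch for it does not close the gap. Surjectivity of $R_2\otimes R_{2d-6}\to R_{2d-4}$ says nothing about \emph{which} quadrics annihilate $\tau_T$, and the residual curves $T_Q$ coming from smooth quadrics $Q\supset T$ only involve $q\in I(T)_2$, so they cannot account for a hypothetical $q\in\tilde I_{[T],2}\setminus I(T)_2$ --- which is exactly the element you must rule out. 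In effect the perfection step is circular: establishing it in degree $2$ is at least as hard as the statement you are trying to prove. A secondary gap is the claim that the case analysis of Proposition~\ref{30july2019} becomes ``strictly more restrictive'' for $d\ge 5$: residual curves in $X\cap Q$ can now have degree up to $2d$ and $K_X=(d-4)h$ changes all the adjunction computations, so the enumeration grows with $d$ and is not carried out; as written this is an assertion, not an argument.

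For comparison, the paper's proof avoids perfection entirely and uses no case analysis. Assuming $q\in\tilde I_{[T],2}\setminus I(T)_2$, it picks $g\in\jac(f)$ with $Z(g)\cap Z(q)\cap T=\emptyset$, restricts to $T\simeq\p$ (projective normality), and observes that $W=q|_T\cdot V_{3d-9}+g|_T\cdot V_0$ is a base-point-free subspace of $V_{3d-3}$ of dimension $3d-7$. Gieseker's lemma on linear series on $\p$ then forces $W\cdot V_s=V_{3d-3+s}$ for $s\ge 5$; taking $s=3d-9$ (which is $\ge 5$ exactly when $d\ge 5$) shows that $q$ and $\jac(f)$ together with $I(T)$ generate everything in degree $2d-4$, contradicting $\tilde I_{[T],2d-4}\neq R_{2d-4}$, which holds because $[T]_{\prim}\neq 0$ (equivalently, $R/\tilde I_{[T]}$ is Artinian--Gorenstein with socle in degree $2d-4$). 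If you want to salvage your approach, you would need either an independent proof of level-$2$ perfection for all $d\ge 5$ or a uniform, completed enumeration of the residual-curve cases; as it stands, the proposal does not prove the proposition.
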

\begin{proof}
  We claim $\tilde I_{[T],2} = I_{T,2}$. Suppose to the contrary that there is $q \in \tilde I_{[T],2} \setminus I_{T,2}$. Pick $g \in \jac(f)$ such that $Z(g)\cap Z(q) \cap T = \emptyset$. We set $V = R/I_T$ and note $V \simeq \Cc[t_0,t_1]$ as $T\simeq \p$ is projectively normal. The linear system $W = q|_T \cdot V_{3d-9} + g|_T \cdot V_0  \subset V_{3d-3}$ has dimension $3d-7$ and no base locus. The lemma of Gieseker~\cite[p.~194]{harris--space_curves} implies that either $W \cdot V_s = V_{3d-3+s}$ or $\dim(W \cdot V_s) = \dim W + 2s = 3d-7+2s$. Therefore, if $s \ge 5$ then $W \cdot V_s = V_{3d-3+s}$.

  On the other hand, $\tilde I_{[T],2d-4} \neq R_{2d-4}$, since the primitive part of $[T]$ is non-zero. Alternatively, $R/\tilde I_{[T]}$ is Artinian--Gorenstein with socle at degree $2d-4$~\cite[Remark~4]{loyola18}. Therefore, when $s=3(2d-4)-3d-3=3d-9$ we must have $W \cdot V_s \neq V_{3d-3+s}$. Hence $3d-9 < 5$, which is a contradiction.
\end{proof}

\begin{remark}\rm
  This result and its proof was generously suggested to us by the anonymous referee. Any mistake in transcription is due to us.
\end{remark}

\section{Implementation}\label{sec:implementation}

Let $X = Z(f) \subset \Pp^{n+1}_{\k}$ be an even dimensional smooth hypersurface with $\k \subset \Cc$. For the code we have written we assume $f$ to have rational coefficients, i.e. $\k = \Qq$, although in principle algebraic coefficients would work. The examples in this section are computed using {\tt PeriodSuite}\footnote{available at \url{https://github.com/emresertoz/PeriodSuite}}. We will now describe how the computation of $\tI_{\delta,u}$ is carried out. 

Throughout we use the grevlex monomial basis for the quotient $R=S/\jac(f)$. The period computations in~\cite{sertoz18} takes $f$ as input and returns an approximation $\cq\in \Cc^{s\times s}$ of the isomorphism $R\otimes \Cc \isoto \PH^{n}(X,\Zz)\otimes \Cc$ with respect to some basis $\gamma_1,\dots,\gamma_s \in \PH^n(X,\Zz)$.

Let $\pi\colon S \to R/\jac(f)$ denote the quotient map. The product $S_u \times S_v \to S_{u+v} \tos R_{u+v}$ can be computed explicitly with respect to the grevlex monomial basis on $R$ and monomial bases for $S_u$, $S_v$. We can compute the lattice of Hodge classes inside $\H^n(X,\Zz) \simeq \Zz^{s+1}$ using these periods~\cite{lairez-sertoz}, provided sufficient precision is used. For a Hodge cycle $\delta$, let $v_\delta=(v_{\delta,i})_{i=1}^s \in \Zz^s$ such that $\delta_{\prim} = \sum_{i=1}^s v_{\delta,i} \gamma_i$.  The approximation of the vector space $\tI_{\delta,u} \subset S_u$ now reduces to numerical linear algebra~(Algorithm~\ref{alg:ideal}).

In {\tt PeriodSuite} the command {\tt PolynomialsVanishingOnHodgeCycle} returns $\tI_{\delta,u}$ for given $\delta$ and $u$\emre{currently in MHS branch}. The most expensive step is the computation of the approximation $\cq$ of the period isomorphism, everything else typically takes less than a second.

\begin{algorithm}
\caption{Computing generators for the space $\tilde I_{\delta,u}$}\label{alg:ideal}
\begin{algorithmic}[1]
\Procedure{PolynomialsVanishingOnHodgeCycle}{$f$,$\cq$,$v_\delta$, $u$}
\State $v \gets N+\frac{n}{2}\deg f-u$
\State $\{m_i\} \subset S_u$, $\{m_j'\} \subset S_v$ monomial bases for $S_u$ and $S_v$
\State $\forall i,j$, $r_{ij;k} \gets $ the $k$-th coordinate vector representing $\pi(m_im_j') \in R_{u+v}$
\State $M_i \gets (r_{i,j;k})_{j,k}$ \Comment{Matrix representing the map $m'_j \mapsto \pi(m_im'_j) \in R_{u+v}$}
\State $M_i^\delta \gets M_i \cdot \cq\cdot  v_\delta^t$ \Comment{Column vector representing the map $m'_j \mapsto \int_\delta \omega_{m_im'_j}$}
\State $M \gets (M^\delta_i)_i$ \Comment{Matrix with rows $M_i^\delta$}
\State $B \gets $ a basis for the left kernel of $M$ 
\State \textbf{return} $\{\sum_{i} b_i m_i \mid (b_i)_i \in B\} $
\EndProcedure
\end{algorithmic}
\end{algorithm}

\subsection{Conics in quartic surfaces}\label{sec:conic}

We will now demonstrate how the Picard number computations relying on approximate periods can be made rigorous in some instances. The proof of the following proposition can be automated, therefore we give another example afterwards.

Let us begin by recalling the maps $\varphi_{\delta,u}$ following Definition~\ref{def:phi} for a quartic $X=Z(f)$. Here the only non-zero maps are those that come from degrees $u=0,\dots,4$. For each $u$, the complementary degree is $v=4-u$ and the maps are defined by
\begin{eqnarray*}
  \varphi_{\delta,u} \colon R_u &\to& R_v^\vee \otimes \Cc \\
  p &\mapsto& \left( q \mapsto \int_\delta \omega_{pq} \right),
\end{eqnarray*}
where $R=S/\jac(f)$ and $\delta \in \H_2(X,\Zz)$ is a Hodge class. 

In the proofs below, $\delta$ will be the class of a conic and $u=1$. In light of Corollary~\ref{29july2019} the kernel of $\varphi_{\delta,1}$ is spanned by the equation of the plane containing the conic representing $\delta$.

% x^4 + x^3*z - x*y^3 + y^4 + z^4 + w^4
\begin{proposition}\label{prop:rk8}
  The quartic surface $X=Z(f) \subset \ppp_{\Cc}$ defined by the polynomial $f=x^4 + x^3z - xy^3 + y^4 + z^4 + w^4$ has Picard number~$8$. In fact, $X$ contains $56$ conics consisting of $28$ coplanar bitangent pairs. The classes of these conics generate $\pic(X)$.
\end{proposition}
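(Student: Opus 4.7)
The plan is to execute Algorithm~\ref{alg:ideal} to produce explicit conics, then sandwich the Picard number between a lower bound coming from these conics and an upper bound from reduction modulo primes.

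First I would compute a numerical approximation of the period isomorphism $\cq$ for $X$ using {\tt PeriodSuite}~\cite{sertoz18}, and then use~\cite{lairez-sertoz} to enumerate the integral Hodge classes of $X$ of small self-intersection. For each primitive integral class $\delta \in \Hdg^1(X)$ with $\delta^2=-2$ and degree $(\delta \cdot h) = 2$ (the invariants of a smooth conic), I would run {\tt PolynomialsVanishingOnHodgeCycle} with $u=1$ to compute $\tilde I_{\delta,1}$. By Corollary~\ref{29july2019} applied to $Y = Z(\ell,q) \subset X$ (where $\deg \ell = 1$, $\deg q = 2$, so $d_i < d$), the ideal $\tilde I_{[Y],1}$ is exactly the line spanned by $\ell$, so a one-dimensional kernel yields the plane $H=Z(\ell)$ containing the conic. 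Having only an approximation, I would reconstruct $\ell$ as a polynomial with algebraic coefficients using LLL-style lattice reduction, and then \emph{verify symbolically} that $f|_H$ factors as a product of two quadratic forms over $\overline{\Qq}$, producing a coplanar bitangent pair of conics in $X$. Iterating over the candidate classes should produce the claimed $28$ planes and $56$ conics, all certified algebraically.

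Next I would form the Gram matrix of the resulting $56$ divisor classes with respect to the intersection pairing on $X$: self-intersections are $-2$ (smooth rational curves) and mutual intersections are computed symbolically from the explicit equations (two coplanar conics meet along their common plane section, non-coplanar pairs intersect transversally in few points which can be counted algebraically). Together with the hyperplane class, this yields a sublattice $L \subset \pic(X)$; its rank, computed by linear algebra, will give the lower bound $\rho(X) \ge 8$. For the matching upper bound $\rho(X) \le 8$, I would reduce $f$ modulo two well-chosen primes of good reduction and apply the methods of~\cite{abbott-10} or~\cite{costa-sertoz}, which bound $\rho(X)$ by analyzing the Frobenius eigenvalues on $\H^2_{\mathrm{\acute{e}t}}$ and combining the resulting $p$-adic bounds via the Artin--Tate discriminant criterion. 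Two independent primes should suffice to force $\rho(X)=8$.

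Finally, to upgrade from ``the conic classes generate a rank $8$ sublattice'' to ``they generate $\pic(X)$,'' I would check saturation. Since $X$ is a K3 surface, $\pic(X)$ is a primitive sublattice of $\H^2(X,\Zz)$ with known discriminant relation to the transcendental lattice $T(X)$; comparing $|\disc L|$ computed from our Gram matrix with $|\disc T(X)|$ computed from the period lattice~\cite{lairez-sertoz} will detect any non-trivial overlattice. If $L$ is already saturated the two discriminants agree up to sign, concluding the proof.

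The main obstacle I expect is the upper-bound step: producing a certified sharp upper bound $\rho(X) \le 8$ via reduction mod $p$ requires locating primes where the $p$-adic Picard bound is tight, which is a delicate numerical search and typically the computational bottleneck. By contrast, the conic reconstruction step is essentially mechanical once the periods and Hodge lattice are computed, because Corollary~\ref{29july2019} makes $u=1$ the exact right degree for conics in a quartic, so no guessing of the truncation level $m$ is needed.
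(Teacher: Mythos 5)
Your pipeline is essentially the paper's: periods to high precision, numerical Hodge/Picard lattice via~\cite{lairez-sertoz}, the $(-2)$-classes of degree $2$, reconstruction of the plane from $\tilde I_{\delta,1}$ by LLL, symbolic certification of the conic pairs, then a mod-$p$ upper bound. The differences are mostly in detail: the paper reconstructs a single plane, whose coefficient generates a degree-$28$ field $K$, and obtains the other $27$ planes as Galois conjugates; it certifies the bitangent conic pair not by factoring $f|_H$ but by computing the singular scheme of $X\cap Z(h)$ over $K$ (degree $6$, reduced part a degree-$2$ point) together with a symbolic check that $X$ contains no lines; and it uses a single prime ($p=101$), which suffices because the target Picard number $8$ is even, so no two-prime Artin--Tate combination is needed (also, the computational bottleneck is the $5000$-digit period computation, not the mod-$p$ step).

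The genuine gap is in your final saturation step, i.e.\ precisely the part needed to prove that the conic classes \emph{generate} $\pic(X)$. You propose to compare $\disc L$ with $\disc T(X)$ ``computed from the period lattice,'' but the numerically computed splitting of $\H^2(X,\Zz)$ into Picard and transcendental parts is exactly the heuristic output of~\cite{lairez-sertoz} (correct only ``with high confidence''), so its discriminant is not a certified quantity and the comparison proves nothing: if the numerical lattice were a non-saturated guess, the numerical $T(X)$ would be off by the same index. What \emph{is} certified by~\cite{lairez-sertoz}, and what the paper uses, is the containment $\pic(X)|_B \subset \Lambda$ for an explicit enormous $B$: hence every true conic class lies in $\Lambda$, the $56$ certified conics exhaust the $56$ classes $\delta\in\Lambda$ with $\delta^2=-2$, $h\cdot\delta=2$, these integrally generate $\Lambda$, so $\Lambda\subset\pic(X)$ and, with the $B$-bound and the rank-$8$ upper bound, $\Lambda=\pic(X)$. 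To repair your route you would either invoke this certified containment, or make the discriminant argument rigorous on the finite-field side (e.g.\ Artin--Tate at $101$ pins down the square class of $\disc \NS$ of the reduction, and the specialization map lets you exclude a proper overlattice of your symbolically computed $L$); as written, the step relying on the numerical transcendental lattice is not a proof.
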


\begin{proof}
  First, we show symbolically that $X$ contains no lines. Anticipating the reconstruction step ahead we compute the periods of $X$ to $5000$ digits~\cite{sertoz18}. We then find a rank $8$ lattice $\Lambda \subset \H^2(X,\Zz)\simeq \Zz^{22}$, together with a polarization $h \in \Lambda$; this lattice is the Picard group of $X$ with high confidence~\cite{lairez-sertoz}. There are $56$ elements $\delta \in \Lambda$ such that $\delta^2=-2$ and $h \cdot \delta =2$, these must be the classes of conics. For each class $\delta$ containing a conic, the numerical computation of $\tI_{\delta,1}$ is straightforward.

We pick the class $\delta$ of one conic and denote by $h=a_0x_0+a_1x_1+a_2x_2+a_3x_3$ a generator of $\tI_{\delta,1}$. Scaling $h$ so that $a_0=1$, we observe $a_3=0$ and that $a_1$ minimally satisfies the following equation:
  \begin{multline*}
      250111a_1^{28} + 3805704a_1^{27} + 22090752a_1^{26} + 55887424a_1^{25} + 29659840a_1^{24} - 146479104a_1^{23} - 339799776a_1^{22} \\
      - 305663232a_1^{21} + 99439104a_1^{20} + 550785848a_1^{19} + 590500464a_1^{18} + 400266240a_1^{17} + 432352000a_1^{16} \\
      + 682095104a_1^{15} + 415718400a_1^{14} - 903697024a_1^{13} - 2446500160a_1^{12} - 2753616384a_1^{11} - 1384379792a_1^{10} \\
      + 638347008a_1^9 + 1876787712a_1^8 + 1838682624a_1^7 + 1112955904a_1^6 + 435142656a_1^5 + 96773184a_1^4 +\\
      8895488a_1^3 + 411648a_1^2 + 8256a_1 + 64=0.%\\ 
%      250111a_2^{28} - 2294400a_2^{27} + 10236624a_2^{26} - 35419424a_2^{25} + 148872384a_2^{24} - 510943104a_2^{23} + 1179956080a_2^{22} \\
%      - 1843866624a_2^{21} + 1836264384a_2^{20} - 685181760a_2^{19} - 1153759232a_2^{18} + 2454706176a_2^{17} - 2311236352a_2^{16} \\
%      + 968556544a_2^{15} + 360069120a_2^{14} - 729995264a_2^{13} + 417230848a_2^{12} - 268886016a_2^{11} + 506732544a_2^{10} \\
%      - 555548672a_2^9 + 64536576a_2^8 + 395624448a_2^7 - 408748032a_2^6 + 171835392a_2^5 + 27197440a_2^4 - \\
%      57671680a_2^3 + 22806528a_2^2 - 3932160a_2 + 262144=0.\\
  \end{multline*}
  We also express $a_2$ in terms of powers of $a_1$ using LLL, but we do not write this expression as it requires $\sim 4000$ characters. Let $K/\Qq$ be the field extension defined by the minimal polynomial of $a_1$ and express $h$ as a linear form defined over $K$. Now, working over $K$ (defined symbolically as an abstract field) we will prove that $X\cap Z(h)$ is a pair of conics.
  
  We used {\tt Magma}~\cite{magma} to show that the singular subvariety $S$ of $X\cap Z(h)$ is of degree $6$ and that the reduced subvariety of $S$ is a degree $2$ irreducible point over $K$. Therefore, over $\Cc$, the curve $X\cap Z(h)$ must contain two singular points with the Jacobian of the curve cutting each of them out with multiplicity three. This means the singularities can not be nodes or cusps and, by the degree--genus formula for plane curves, the curve can not be irreducible. Since $X$ contains no lines, $X \cap Z(h)$ is a pair of conics bitangent to one another.
  
  Each of the $28$ embeddings of $K$ into $\Cc$ will give another pair of conics, proving that we have at least $56$ conics in $X$.

  We can now prove that $X$ has Picard number $8$. For $B >0$ let $\pic(X)|_B=\Zz \langle \xi \in \pic(X) \mid -\xi_{\prim}^2 < B \rangle$\emre{actually this is not the notation}. The lattice $\Lambda \subset \H^2(X,\Zz)$ contains $\pic(X)|_B$ for some explicit $B \gg 10^{1000}$~\cite{lairez-sertoz}. Therefore, we could not have missed the class of any conic in $X$. As we found $56$ conics in $X$ and $56$ conic classes in $\Lambda$, all of the conic classes in $\Lambda$ are truly in $\pic(X)$. As $\Lambda$ is integrally generated by these conic classes, we have $\Lambda=\pic(X)|_B$, which implies $\rk \pic(X) \ge \rk \Lambda = 8$. 
  
  By computing the Zeta function of the reduction of $X$ over $\Ff_{101}$ we show $\rk \pic(X) \le 8$~\cite{abbott-10,chk}. For this last step we used {\tt controlledreduction}\footnote{\url{https://github.com/edgarcosta/controlledreduction}}~\cite{costa-phd}.%checked
\end{proof}

%5*x^4 - 4*x^2*z*w + 8*y^4 - 5*z^4 + 4*z*w^3
\begin{proposition}
  The quartic surface $X=Z(5x^4 - 4x^2zw + 8y^4 - 5z^4 + 4zw^3 ) \subset \ppp_{\Cc}$ has Picard number~$14$. In fact, $X$ contains $102$ conics and $4$ lines whose classes generate $\pic(X)$.
\end{proposition}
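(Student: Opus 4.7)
The plan is to follow the strategy of Proposition~\ref{prop:rk8} verbatim, with minor modifications to accommodate the presence of lines. The proof has three independent pieces: an \emph{a priori} symbolic enumeration of the lines on $X$; a numerical reconstruction of the conics that is then certified symbolically over an appropriate number field; and an upper bound on $\rho(X)$ obtained via reduction modulo a good prime. The interaction of the numerical step with the lines is the main bookkeeping complication, so I would treat the lines first.

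First, I would use a standard symbolic parameter search to show that $X$ contains exactly $4$ lines, and record their defining ideals over $\overline{\Qq}$. Then, I would compute the periods of $X$ to several thousand digits via \texttt{PeriodSuite} following~\cite{sertoz18}, and apply~\cite{lairez-sertoz} to find a candidate rank~$14$ sublattice $\Lambda \subset \H^2(X,\Zz) \simeq \Zz^{22}$ containing the polarization $h$. The classes of the $4$ lines are detected by $\delta^2=-2,\ h\cdot \delta=1$ and must lie in $\Lambda$; the classes of conics are detected by $\delta^2=-2,\ h\cdot\delta=2$. By enumerating such vectors in $\Lambda$ (which is a finite list once $\Lambda$ is fixed), I expect to find exactly $102$ conic classes. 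These together with the $4$ line classes should integrally generate $\Lambda$, which is the assertion to be verified by a Smith normal form computation.

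Next comes the reconstruction/verification step. For each of the $102$ candidate conic classes $\delta$, I would run Algorithm~\ref{alg:ideal} at degree $u=1$ to produce a single numerical linear form $h_\delta \in \tilde I_{\delta,1}$, and then use LLL to recognize its coefficients as algebraic numbers in some number field $K_\delta/\Qq$. Working in $K_\delta$ abstractly as a field extension, I would ask a computer algebra system (e.g.\ \texttt{Magma}~\cite{magma}) to factor the plane section $X\cap Z(h_\delta)$ over $K_\delta$ and verify that it splits as a union of conics, possibly together with some of the $4$ known lines. This step is completely analogous to the corresponding step in Proposition~\ref{prop:rk8}, except that one must allow for plane sections of the form \emph{conic $+$ two lines} or \emph{four lines}, distinguished by whether the singular subscheme matches a conic--conic bitangency pattern or a configuration involving one of the four known lines; the degree--genus formula, together with the already-enumerated line list, is enough to rule out any other decomposition. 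Each Galois orbit of a $K_\delta$-conic produces $[K_\delta:\Qq]$ distinct conics in $X$, and I would check that the orbits sum to $102$.

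Finally, for the upper bound, I would compute the zeta function of the reduction of $X$ modulo a prime of good reduction (e.g.\ $p=7$ or $p=11$) using \texttt{controlledreduction}~\cite{costa-phd}, and apply the method of~\cite{abbott-10, chk} to conclude $\rho(X) \le 14$. Combined with the lower bound $\rho(X) \ge \rk \Lambda = 14$ from the explicit generators exhibited above, this gives $\rho(X)=14$ and $\pic(X)=\Lambda$. The main obstacle I anticipate is not any one step individually, but rather managing the $K_\delta$'s: their degrees may be large and several distinct number fields may appear, so the symbolic verification must be carried out orbit-by-orbit; picking a good prime for the characteristic-$p$ bound may also require some trial since a prime achieving the sharp bound of $14$ is not guaranteed \emph{a priori}.
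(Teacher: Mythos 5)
Your proposal follows essentially the same route as the paper's proof: compute periods to high precision, extract a rank~$14$ lattice with $4$ line classes and $102$ conic classes, reconstruct each conic-bearing plane numerically and certify it symbolically over a number field (the planes fall into three Galois orbits, of sizes $24$, $24$ and $3$), and obtain the matching upper bound from the zeta function of a good reduction (the paper used $p=101$). The only detail you handle more laboriously than necessary is the interaction with the lines: since the four lines of $X$ are coplanar, any plane containing a smooth conic must contain a second conic, so the ``conic $+$ two lines'' and ``four lines'' decompositions you allow for never actually occur.
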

\begin{proof}
  The general outline of the proof follows that of Proposition~\ref{prop:rk8}. We will mention the peculiarities of this quartic. First, it is readily checked that $X$ contains four coplanar lines. In particular, any plane containing a smooth conic of $X$ will contain another smooth conic.  The set of planes containing a conic in $X$ break into three Galois orbits. The first orbit consists of $24$ planes, each of the form $Z(x+a_2z+a_3w)$ and containing a pair of bitangent conics. The second orbit also has $24$ planes, but of the form $Z(y+a_2z+a_3w)$ and containing a pair of transversal conics. The third orbit has size $3$, elements of the form $Z(z+a_3w)$, $25a_3^3 + 4a_3 + 20$, containing a pair of bitangent conics. The Zeta function for the reduction was computed over the prime $101$.
\end{proof}

  \subsection{Twisted cubics in quartic surfaces revisited}\label{sec:twisted_cubics_revisited}

  \subsubsection{A single example}

  Consider the quartic surface cut out by $x^4 + x^3z + y^4 + z^4 + w^4$. This has Picard number 18, contains 16 lines, 288 conics and 1536 twisted cubics. We computed the $\tI_{\delta,2}$ for the class $\delta$ for each of the twisted cubics and found $\dim \tI_{\delta,2} = 3$ in each case. We thus have a floating point representation of the coordinates of $\tI_{\delta,2} \subset S_2$ in the Grassmannian $\gr(3,10)$. As these coordinates must be algebraic numbers, we may once again reconstruct them exactly recovering $\tI_{\delta,2}$ and therefore $I(T)$.

  \subsubsection{Many examples}\label{sec:many_examples}

  Now let $T \subset X$ be any twisted cubic in a smooth quartic. By general considerations, the ideal $\tI_{\delta}$ gives rise to an Artinian--Gorenstein quotient $S/\tI_{\delta}$ of socle degree~$4$~\cite[Remark~4]{loyola18}. Upto translation of $\ppp_\C$ we can put $T$ into the standard form and randomly sample quartics $X$ containing $T$, so that we know $I':=\jac(X)+I(T)$ but not $I_{[T]}$. It turns out that, in tens of thousands of random examples, $\codim I'_4=1$. If $g$ is the quartic form apolar to $I'$ then $I_{[T]}$ is the apolar ideal of $g$ (using Macaulay's correspondance theorem for graded Artinian--Gorenstein algebras~\cite[\S 1.3]{dolgachev}). In each of our random examples we thus computed $I_{[T],2}$ and observed that $\dim I_{[T],2} =3$. This point of view will be investigated in a future paper.

%  \subsubsection{Lines and genus one curves}
%
%  We may also compute lines in $x^4 + x^3z + y^4 + z^4 + w^4$ and find correctly that $\dim \tI_{\delta,1}=2$ for each class $\delta$ containing a line. Each line $L$ determines a pencil of genus one curves, all lying in one class $[E]$. Since primitive parts of the classes $[L]$ and $[E]$ span the same line, we have $\tI_{[L],1}=\tI_{[E],1}$. Indeed, the class $[E]$ of a plane cubic inside the quartic can not distinguish between different representatives so we find all planes that contain any $E' \in [E]$.

%  \begin{verbatim}
%  Higher dimensional examples coming soon!! 
%  It should be possible to either:
%  1) Randomly put together generators until some things have non-zero kernel
%  (an interesting way to discover meaningful classes)
%  2) Search for complete intersection classes (e.g. a plane in a fourfold),
%  its self-intersection can be computed so we can search for ``feasible'' classes then check ck1
%  \end{verbatim}

\section{Fields generated by periods}

Let $X$ be a smooth projective variety defined over a subfield $\k$ of $\Cc$ and let $\bark \subset \Cc$ be the algebraic closure of $\k$ in $\Cc$. Denote by $X^\an$ the underlying complex analytic manifold of $X$. In the following, $m\in\N$ is an even number. 

\begin{definition}
  A homology class $\delta \in \H_{m}(X^\an,\Q)$ and its Poincar\'e dual are called 
  \emph{algebraic} if there exists subvarieties $Z_1,\dots,Z_a \subset X$ of dimension $\frac{m}{2}$ 
  such that $\delta$ is a rational linear combination of $[Z_i]$.% In this case, $\delta$ and its Poincar\'e dual are said to be \emph{supported on $Z_1,\dots,Z_a$}. 
\end{definition}

The de~Rham cohomology can be defined purely algebraically~\cite{gro66} giving rise to the $\k$-vector spaces $\H_{\dR}^m(X/\k)$ equipped with a canonical isomorphism $\H_{\dR}^m(X/\k)\otimes_\k \C \simeq \H^m(X^\an, \C)$ for each $m \ge 0$. For a field extension $\sfK/\k$ we will write $\H_{\dR}^m(X/\sfK)$ for the algebraic de~Rham cohomology of the base change of $X$ to $\sfK$; recall that there is a natural isomorphism $\H_{\dR}^m(X/\sfK) \simeq \H_{\dR}^m(X/\k) \otimes_\k \sfK$. For the general discussion on algebraic de Rham cohomology see Deligne's lecture notes~\cite{dmos}. Our point of departure is the following fact.

\begin{proposition}[Proposition~1.5~\cite{dmos}]\label{prop:algebraic_periods}
  If $\delta \in \H_m(X,\Qq)$ is algebraic, then for every $\omega \in \H^m_{\dR}(X/\k)$ the following \emph{period integrals} are in~$\bark$:
\begin{equation}
  \label{18.10.2018}
  p_{\omega}(\delta):=\frac{1}{(2\pi i)^{\frac{m}{2}}}\mathlarger{\int}_\delta\omega.
\end{equation}
\end{proposition}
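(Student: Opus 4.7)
The plan is to reduce the statement to the case $\delta = [Z]$ for a single irreducible subvariety $Z \subset X$ defined over $\bark$, and then identify $p_\omega(\delta)$ with an algebraic trace pairing in de~Rham cohomology. By $\Qq$-linearity it suffices to treat $\delta=[Z]$ with $Z \subset X_\Cc$ irreducible of dimension $m/2$. A priori, $Z$ is defined only over some extension of $\bark$. To descend $Z$, I would apply the Hilbert scheme $\mathrm{Hilb}^P_{X/\bark}$ for the Hilbert polynomial $P$ of $Z$: this is a scheme of finite type over $\bark$ possessing a $\Cc$-point corresponding to $Z$. The irreducible component $H$ containing $[Z]$ is then a nonempty variety over $\bark$, hence $H(\bark) \neq \emptyset$; since the topological cycle class of the fibers is locally constant in a proper flat family over a connected base, any $\bark$-point of $H$ gives a subvariety $Z'/\bark$ with $[Z']=[Z]$ in $\H_m(X^{\an},\Qq)$. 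Replacing $Z$ by $Z'$, I may assume $Z$ is defined over $\bark$.

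Next, I would invoke two classical inputs from Grothendieck's theory of algebraic de~Rham cohomology: the algebraic cycle class $\mathrm{cl}_{\dR}(Z) \in \H^{2c}_{\dR}(X/\bark)$, with $c = \dim X - m/2$, which corresponds under the comparison with singular cohomology to $(2\pi i)^c \mathrm{PD}([Z])$, where $\mathrm{PD}([Z]) \in \H^{2c}(X^{\an},\Qq)$ is the topological Poincar\'e dual; and the algebraic trace $\tr \colon \H^{2n}_{\dR}(X/\bark) \to \bark$ with $n = \dim X$, which corresponds to $(2\pi i)^{-n} \int_X$ under the same comparison. For any $\omega \in \H^m_{\dR}(X/\k)$, viewed inside $\H^m_{\dR}(X/\bark)$, one then computes
\begin{align*}
\tr\bigl(\mathrm{cl}_{\dR}(Z) \cup \omega\bigr)
&= (2\pi i)^{-n} \int_X \mathrm{cl}_{\dR}(Z) \cup \omega \\
&= (2\pi i)^{c-n} \int_X \mathrm{PD}([Z]) \cup \omega \\
&= (2\pi i)^{-m/2} \int_{\delta} \omega \;=\; p_\omega(\delta).
\end{align*}
The left-hand side is the image under the $\bark$-linear trace map of an algebraic class in $\H^{2n}_{\dR}(X/\bark)$, hence lies in $\bark$.

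The main obstacle lies in setting up the machinery rather than in the verification itself. The descent step combines the finiteness of the Hilbert scheme over $\bark$ with the topological invariance of cohomology classes in flat families; and the final identification of $p_\omega(\delta)$ with an algebraic trace depends on the precise $(2\pi i)$-normalizations of the de~Rham cycle class and the trace map, which are classical but delicate to set up. Once both inputs are granted, the proof reduces to the bookkeeping displayed above.
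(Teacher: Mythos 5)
Your proof is correct and is essentially the standard argument behind the citation: the paper itself gives no proof of this statement beyond referring to Deligne's Proposition~1.5 in \cite{dmos}, whose proof likewise first replaces the cycle by one defined over $\bark$ via a spreading-out/specialization argument (your Hilbert-scheme descent is a cosmetic variant of that step, using constancy of the topological cycle class in a flat proper family over a connected base) and then identifies $p_\omega(\delta)$ with the $\bark$-valued algebraic trace of $\mathrm{cl}_{\dR}(Z)\cup\omega$, keeping track of the $(2\pi i)$-normalizations exactly as you do. The two classical inputs you flag (the algebraic de~Rham cycle class for possibly singular subvarieties and the compatibility of the algebraic trace with $(2\pi i)^{-n}\int_X$) are indeed the only nontrivial ingredients, and your bookkeeping $c-n=-m/2$ checks out.
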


\begin{definition}
  Given an algebraic cycle $\delta=[Z]$ we will denote by $\k_\delta$, and by $\k_Z$, the field spanned over $\k$ by the set of all period integrals $p_\omega(\delta)$. 
\end{definition}

\subsection{Fields of definition of algebraic cycles}

The field $\k_\delta$ is the smallest field containing $\k$ such that $\delta$ induces a cycle class in $H^{2n-m}_\dR(X/\k_\delta)$. Therefore, if $\delta=[Z]$ then the field of definition of $Z$ must contain $\k_\delta$. It is possible that $Z$ is defined over a transcendental extension of $\k$, e.g., if $Z$ is a transcendental fiber of a continuous family of algebraic cycles inside $X$. 

\begin{proposition}
\label{prop:vistachinesa2018}
For any algebraic cycle $\delta \in \H_m(X^{\an},\Zz)$, there are subvarieties $Z_1,Z_2,\ldots, Z_s \subset X$ of pure dimension $\frac{m}{2}$ defined over $\k_\delta$ and integers $a_0,\dots,a_s,\ a_0>0$ such that 
\begin{equation}\label{eq:vista}
 a_0 \delta=a_1 [Z_1]+a_2 [Z_2]+\dots +a_s [Z_s]. 
\end{equation}
\end{proposition}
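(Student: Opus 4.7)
The plan is to use Galois descent. First I would note that $\delta$ corresponds, via Poincar\'e duality and the period functional, to a de~Rham cohomology class defined over $\k_\delta$, hence fixed by $\Gal(\bark/\k_\delta)$. I would then represent $\delta$ as an integer combination of classes of $\bark$-defined subvarieties and symmetrize this expression over $\Gal(\bark/\k_\delta)$. The Galois-invariance of the symmetrized cycle will allow it to descend to a combination of $\k_\delta$-subvarieties, giving the claim with $a_0$ equal to the order of the Galois group used.

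More precisely, by Proposition~\ref{prop:algebraic_periods} the linear functional $\omega \mapsto (2\pi i)^{-m/2}\int_\delta \omega$ on $\H^m_\dR(X/\k)$ takes values in $\k_\delta$, so via Poincar\'e duality it corresponds to a class $[\delta]_\dR \in \H^{2\dim X - m}_\dR(X/\k_\delta)$ which becomes $\Gal(\bark/\k_\delta)$-invariant after base change to $\bark$. Writing $\delta = \sum_i b_i[W_i]$ in $\H_m(X^\an,\Qq)$ with $W_i \subset X_\Cc$ irreducible of dimension $m/2$ and (after clearing denominators) $b_i \in \Zz$, I would first replace each $W_i$ by a $\bark$-specialization with the same class $[W_i]$, using that $\bark$-points are Zariski-dense in each $\k$-irreducible component of the Hilbert scheme $\mathrm{Hilb}(X/\k)$ and that Betti cycle classes are locally constant on the universal family in the classical topology. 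Next, choose a finite Galois extension $L/\k$ containing $\k_\delta$ and all fields of definition of the $W_i$, and set $H := \Gal(L/\k_\delta)$. The $\Gal(L/\k)$-equivariance of the cycle class map combined with the invariance of $[\delta]_\dR$ under $H$ yields $\delta = \sum_i b_i[\sigma(W_i)]$ for every $\sigma \in H$. Averaging,
\[
|H|\cdot \delta \;=\; \sum_i b_i \sum_{\sigma \in H}[\sigma(W_i)].
\]
For each $i$, the $H$-invariant closed subscheme $\bigcup_{\sigma \in H/H_i}\sigma(W_i)$, where $H_i := \mathrm{Stab}_H(W_i)$, descends by Galois descent to a $\k_\delta$-subvariety $Z_i \subset X$ of pure dimension $m/2$, and $\sum_{\sigma \in H}[\sigma(W_i)] = |H_i|\cdot[Z_i]$. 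Setting $a_0 := |H| > 0$ and $a_i := b_i|H_i|$ gives the required integer relation $a_0\delta = \sum_i a_i[Z_i]$.

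The step I expect to be the main obstacle is the reduction from $\Cc$-defined to $\bark$-defined representatives $W_i$. It relies on the local constancy of Betti cycle classes in flat families of subvarieties together with a standard spreading-out and specialization argument within $\mathrm{Hilb}(X/\k)$. Once this is in place, the remaining ingredients — Galois equivariance of the cycle class map, and descent of a Galois-invariant closed subscheme to its field of invariants — are essentially formal.
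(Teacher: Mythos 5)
Your argument is essentially the paper's: reduce to cycles defined over a finite extension of $\k$ (the paper invokes the argument of Deligne's I.1.5 where you spell out the Hilbert-scheme spreading-out), observe that the de~Rham class of $\delta$ is fixed by $\Gal(\bark/\k_\delta)$ because its periods lie in $\k_\delta$, and then symmetrize over the Galois group and descend the invariant cycle --- the paper averages the whole cycle by $\tfrac{1}{|G|}\sum_\sigma \sigma(Z)$ and clears denominators, while you take orbit sums with stabilizer multiplicities, which is the same thing. The only detail you omit is that all equalities obtained via the de~Rham comparison hold a priori only in $\H_m(X^\an,\Qq)$, so to get the stated identity in $\H_m(X^\an,\Zz)$ one must multiply through by a further integer annihilating the torsion of $\H_m(X^\an,\Zz)$; this is harmless (it is absorbed into $a_0$ and the $a_i$) and is exactly the closing remark of the paper's proof.
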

\begin{proof}
  We may write $\delta=[Z]$ for an algebraic cycle $Z$ on $X_{\Cc}$ and arguing as in the proof of~\cite[\nopp I.1.5]{dmos} we may assume $Z$ is defined over a finite extension $\k'$ of $\k$. As noted earlier, $\k_\delta \subset \k'$ is forced and we may assume $\k'/\k_\delta$ is Galois. Let $G$ be the Galois group $\Gal(\k'/\k_\delta)$.
We define the algebraic cycle
\begin{equation} \label{eq:vista_galois}
  Z' :=\frac{1}{\left|G \right|}\sum_{\sigma\in G} \sigma(Z)
\end{equation}
which is invariant under $G$ and therefore defined over $\k_\delta$. As the cohomology class $[Z]$ is defined over $k_{\delta}$ it is fixed by $G$. Therefore, $\forall \sigma \in G$, $[\sigma Z] = \sigma[Z] = [Z]$ and $[Z']=[Z]$ in $\H_m(X^{\an},\Qq)$.

To get the expression~\eqref{eq:vista} in $\H_m(X^{\an},\Zz)$ clear denominators in \eqref{eq:vista_galois} to get an identity modulo torsion. A sufficient multiple of this identity will kill the torsion, giving an equality.
\end{proof}

The statement of Proposition~\ref{prop:vistachinesa2018} is false if we require $a_0=1$, see Remark~\ref{rem:invariant_L}. What estimates can we put on the minimal possible $a_0$?  We provide an answer in the setting of the following theorem. 

\begin{theorem}
\label{thm:linear_system}
Let $X$ be a smooth projective variety over $\k\subset\C$ with $H^1(X,\O_X)=0$, $d_1$ an integer annihilating torsion in $\H^2(X^{\an},\Zz)$ and $d_2$ an integer such that $X$ has a rational point over a field extension $\k'/\k$ of degree $d_2$. For any divisor in $Z$ on $X_{\Cc}$ the following are true.
 \begin{enumerate}
   \item The divisor $d_1d_2 Z$ is linearly equivalent to a divisor defined over $\k_Z$. \label{item:integral_equivalence-2}
   \item If $d_1 Z$ is effective then  the family of divisors linearly equivalent to $d_1 Z$ are parametrized by a Brauer--Severi variety defined over $\k_Z$. \label{item:BS_variety}
 \end{enumerate}
\end{theorem}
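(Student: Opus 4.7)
\emph{Plan.} We prove both statements in tandem by producing a line bundle on $X_{\k_Z}$ whose class is $d_1d_2[Z]$; linear equivalence descends from such data because $\H^1(X,\O_X)=0$ forces $\Pic^0(X)=0$, so the cohomology class is a complete invariant of linear equivalence. The key intermediate step, from which~(2) also follows, is to descend $d_1[Z]$ first to a Galois-invariant class over $\k_Z$ and then, up to the factor $d_2$, to an honest line bundle on $X_{\k_Z}$.

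\emph{Step 1: Galois invariance of $d_1[Z]$.} The exponential sequence combined with $\H^1(X,\O_X)=0$ gives an injection $\Pic(X_{\bark}) \hookrightarrow \H^2(X^\an,\Zz)$, and the further map to $\H^2_\dR(X/\bark)$ kills only the torsion of $\H^2(X^\an,\Zz)$, which by hypothesis is annihilated by $d_1$. Proposition~\ref{prop:algebraic_periods} together with algebraic Poincar\'e duality shows that the cycle class $\cl([Z])\in\H^2_\dR(X/\Cc)$ already lies in the $\k_Z$-subspace $\H^2_\dR(X/\k_Z)$. The $\Gal(\bark/\k_Z)$-equivariance of the de~Rham cycle class then forces, for every $\sigma\in\Gal(\bark/\k_Z)$, the difference $\sigma^*[Z]-[Z]$ to be torsion in $\Pic(X_{\bark})$, so $d_1\sigma^*[Z]=d_1[Z]$. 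Hence $d_1[Z]\in\Pic(X_{\bark})^{\Gal(\bark/\k_Z)}$.

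\emph{Step 2: Brauer obstruction and derivation of (1) and (2).} Hilbert~90 combined with the Hochschild--Serre spectral sequence for $X_{\bark}\to X_{\k_Z}$ yields the exact sequence
\[
0 \longrightarrow \Pic(X_{\k_Z}) \longrightarrow \Pic(X_{\bark})^{\Gal(\bark/\k_Z)} \longrightarrow \mathrm{Br}(\k_Z) \longrightarrow \mathrm{Br}(X_{\k_Z}),
\]
and we let $\alpha\in\mathrm{Br}(\k_Z)$ be the image of $d_1[Z]$; by exactness $\alpha$ vanishes in $\mathrm{Br}(X_{\k_Z})$. Since $X(\k')\neq\emptyset$ implies $X(\k'\cdot\k_Z)\neq\emptyset$, the map $\mathrm{Br}(\k'\cdot\k_Z)\to\mathrm{Br}(X_{\k'\cdot\k_Z})$ is split injective, forcing the restriction of $\alpha$ to $\mathrm{Br}(\k'\cdot\k_Z)$ to vanish. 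Since $[\k'\cdot\k_Z:\k_Z]$ divides $d_2$, restriction--corestriction yields $d_2\alpha=0$ in $\mathrm{Br}(\k_Z)$, and $d_1d_2[Z]$ lifts to a line bundle $\cL$ on $X_{\k_Z}$; any representative divisor proves~(1). For~(2), effectiveness of $d_1Z$ gives $\H^0(X_{\bark},L)\neq 0$ for the Galois-invariant line bundle $L$ of class $d_1[Z]$; the isomorphisms $\sigma^*L\simeq L$ from Step~1 are defined only up to a scalar ambiguity that vanishes upon projectivization, furnishing a genuine descent datum on $|d_1Z|_{\bark}=\Pp(\H^0(X_{\bark},L)^\vee)$. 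The descended object is a Brauer--Severi variety over $\k_Z$ whose Brauer class is precisely the $\alpha$ above.

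\emph{Main obstacle.} The technical core is Step~1: translating the algebraic de~Rham statement ``periods of $Z$ lie in $\k_Z$'' into integral Galois invariance of $d_1[Z]$ inside $\Pic$. Torsion in $\H^2(X^\an,\Zz)$ is the sole source of discrepancy between these two pictures, and the factor $d_1$ is precisely tuned to erase it; once the bridge is in place, the Brauer obstruction computation and the projective descent are routine Galois cohomology.
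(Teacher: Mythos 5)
Your proposal follows essentially the same route as the paper's proof: the periods of $Z$ together with the non-degenerate trace pairing place $c_1(\cL)$ in $\H^2_{\dR}(X/\k_Z)$; since $\H^1(X,\co_X)=0$ the kernel of $c_1$ on $\pic(X_{\bark})$ is torsion, so $\cL^{d_1}$ is $\Gal(\bark/\k_Z)$-invariant; the descent obstruction in $\mathrm{Br}(\k_Z)$ is killed using the point over $\k'$; and the projectivized linear system descends as a Brauer--Severi variety. The only real difference is presentational: where the paper cites Coray--Manoil and Bruin--Flynn/Gille--Szamuely, you unpack those citations into the Hochschild--Serre exact sequence and an explicit projective descent datum, which is precisely what those references encode.

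One step fails as literally stated: the claim that $[\k'\cdot\k_Z:\k_Z]$ divides $d_2$ is false in general (take $\k=\Qq$ and let $\k'$, $\k_Z$ be two distinct cubic subfields of an $S_3$-sextic field: the compositum has degree $2$ over $\k_Z$, which does not divide $d_2=3$), and restriction--corestriction along $\k'\k_Z/\k_Z$ only kills $[\k'\k_Z:\k_Z]\cdot\alpha$, not $d_2\alpha$. The repair is to use the whole algebra $\k'\otimes_{\k}\k_Z\cong\prod_i L_i$: each factor $L_i$ contains a copy of $\k'$, so $X(L_i)\neq\emptyset$ and $\alpha|_{L_i}=0$, whence $[L_i:\k_Z]\,\alpha=0$ for every $i$; since $\sum_i[L_i:\k_Z]=d_2$, this gives $d_2\alpha=0$ (equivalently, $X_{\k_Z}$ carries a zero-cycle of degree $d_2$ and the kernel of $\mathrm{Br}(\k_Z)\to\mathrm{Br}(X_{\k_Z})$ is killed by the index). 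Note that the paper's own proof writes $[\k':\k_Z]$, tacitly assuming $\k_Z\subseteq\k'$, so it glosses the same point; with this fix your argument is complete. Two smaller glosses worth making explicit: the identification $\pic(X_{\bark})=\pic(X^{\an})$ (so that $\cL$ and the Galois action make sense over $\bark$), and the fact that the descended line bundle of class $d_1d_2[Z]$ admits a rational section over $\k_Z$, whose divisor is the one required in item~(1).
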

\begin{proof}
Let $\cL=\co_{X^\an}(Z)$ be the line bundle corresponding to $Z$.
%and   $X/\bark$ be the base change of $X$ to $\bark$. 
Our hypothesis $H^1(X_,\O_X)=0$ implies that $\pic(X/\bark)=\pic(X^\an) \toi \H^2(X^{\an},\Zz)$ so that $\cL$ is defined over $\bark$.
By Poincar\'e duality we have 
\[
  p_\omega(Z)=\frac{1}{(2\pi i)^ {n-1}}\mathlarger{\int}_{[Z]}\omega= \frac{1}{(2\pi i)^n}\mathlarger{\int}_{X^{an}}\omega\cup c_1(\cL)=\Tr( \omega\cup 
  c_1(\cL)),
\]
where $c_1(\cL)$ is $2\pi i$ times the topological Chern class of $\cL$, and in fact $c_1(\cL)\in H^2_{\dR}(X/\bark)$.  
The cup product $H^{2n-2}(X/\k)\times H^{2}(X/\k)\to H^{2n}(X/\k)$ and the trace map 
$\Tr\colon H^{2n}(X/\k)\cong \k$ can be defined for $X/\k$. 
The composition of these two maps is a non-degenerate 
$\k$-bilinear map and, by our hypothesis on the periods of $Z$, $\Tr(\cdot\cup c_1(\cL))$ has values in $\k_Z$, thus $c_1(\cL)\in H^2_{\dR}(X/\k_Z)$.
Since $\H^1(X,\co_X)=0$, the kernel of the Galois invariant map $c_1\colon \pic(X/\bark) \to \H^2_{\dR}(X/\bark)$ is contained in the torsion subgroup of $\H^2(X^{\an},\Zz)$. Therefore, $\cL^{d_1} \in \pic(X/\bark)^{\Gal(\bark/\k_Z)}$.  This does not mean $\cL^{d_1}$ is defined over $\k_Z$, see Remark~\ref{rem:invariant_L}.  However, if $d_1 Z$ is effective then the $\bark$-projective space $|\H^0(X/\bark,\cL^{d_1})|$ descends to a Brauer--Severi variety defined over $\k_Z$, see~\cite{BruinFlynn2004,gille06}. Item~\ref{item:BS_variety} is now proven. 
  
As for Item~\ref{item:integral_equivalence-2}, the map $\pic(X/ \k') \to \pic(X/\bark)^{\Gal(\bark/\k')}$ is surjective~\cite[Corollary~2.3]{CorayManoil1996}. Moreover, via~\cite[Proposition~2.2]{CorayManoil1996} the cokernel of $\pic(X/ \k_Z) \to \pic(X/\bark)^{\Gal(\bark/\k_Z)}$ is annihilated by $[\k':\k_Z]$, which divides $d_2=[\k':\k]$. Therefore,  $(\cL^{d_1})^{d_2}$ is contained in $\pic(X/\k_Z)$. Any rational section of $\cL^{d_1d_2}$ will give a divisor defined over $\k_Z$ linearly equivalent to $d_1d_2 Z$.
 \end{proof}

\begin{remark}\rm\label{rem:rigid_linear_system}
  With $X$ as in Theorem~\ref{thm:linear_system} and $d_1=1$, if $Z \subset X^\an$ is an effective divisor isolated in its linear system then $Z$ is defined over $\k_Z$ even if $X$ has no $\k_Z$ points. In this case, we use the fact that a zero dimensional Brauer--Severi variety is a point.
\end{remark}

\begin{remark}\rm 
  A smooth surface $X$ in $\ppp_\k$ fulfills the hypotheses of Theorem~\ref{thm:linear_system} with $d_1=1$ and $d_2=\deg X$: the first follows by Lefschetz hyperplane theorem and the latter by intersecting $X$ with a line. 
\end{remark}

\begin{remark}\rm
\label{rem:invariant_L}
Galois invariance of the isomorphism class of a line bundle does not imply that a line bundle in the isomorphism class can be defined over the expected base field. Take the curve $X$ defined by $x^2+y^2+z^2$ in $\pp_\Q$ which has no $\Q$ points and therefore has no line bundles of degree $1$. On the other hand, since $\pic(X/\bark) \simeq \Zz$, the isomorphism class of a line bundle depends only on the degree, therefore the isomorphism class of line bundles of degree $1$ is fixed by the Galois action. The Brauer--Severi variety associated to this class is the curve $X/\Q$ itself.
\end{remark}

\subsection{Bounds on the degree of the field extension $\k_Z$}\label{sec:bounds}

For a smooth algebraic variety $X$ over $\k$ let $Z$ be an algebraic cycle of dimension $\frac{m}{2}$ in $X/\bark$. In this section we will give bounds on the degree $[\k_Z:\k]$.

Let $\omega_1,\omega_2,\ldots,\omega_b$ be a basis for the primitive cohomology $\PH^m_\dR(X/\k)$ and consider the period vector $p(Z):=\left ( p_{\omega_1}(Z),\ p_{\omega_2}(Z),\cdots, p_{\omega_b}(Z)\right) \in \bark^b$,
where we used the notation and statement of Proposition~\ref{prop:algebraic_periods}. Let $\Lambda_Z \subset \bark^b$ be the $\Zz$-module generated by the Galois conjugates of $p(Z)$ and let $\lambda_Z = \rk \Lambda_Z$. Our bounds rely on the following theorem. 

\begin{theorem}[Minkowski~\cite{minkowski87, guralnick06}]\label{thm:minkowski}
  The size of the largest finite group in $\gl(n,\Zz)$ divides 
\begin{equation}
  M(n):= \prod_{p \text{ prime}} p^{\sum_{k=0}^{\infty} \lfloor n/p^k(p-1) \rfloor}.
\end{equation}
\end{theorem}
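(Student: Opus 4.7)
My plan is to follow Minkowski's classical two-step strategy: embed an arbitrary finite subgroup $G \subset \gl(n,\Zz)$ into a finite linear group over a residue field, and then bound the $\ell$-primary part of $|G|$ for each prime $\ell$ separately, taking advantage of the freedom to choose the auxiliary prime.

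\textbf{Step 1 (Reduction modulo $q$ is injective on torsion).} First, I would show that for every odd prime $q$, the reduction map $\rho_q\colon \gl(n,\Zz) \to \gl(n,\Ff_q)$ has torsion-free kernel. If $g = I + q^s A$ with $A \not\equiv 0 \pmod q$ satisfies $g^m = I$ for some $m \geq 1$, the binomial expansion gives
\[
m q^s A + \binom{m}{2} q^{2s} A^2 + \cdots = 0,
\]
and comparing $q$-adic valuations on an entry of $A$ that is a unit modulo $q$ forces $q \mid m$; iterating drives $A$ to $0$, a contradiction. Hence $G$ embeds into $\gl(n,\Ff_q)$, whose order equals $q^{n(n-1)/2}\prod_{k=1}^n(q^k-1)$.

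\textbf{Step 2 (Bounding $v_\ell$ for odd $\ell$).} For each odd prime $\ell$, Dirichlet's theorem produces an auxiliary prime $q \neq \ell$ that is a primitive root modulo $\ell$ but not modulo $\ell^2$: such $q$ have multiplicative order $d = \ell - 1$ modulo $\ell$ and satisfy $v_\ell(q^{\ell-1}-1) = 1$. The lifting-the-exponent lemma then gives $v_\ell(q^k - 1) = 0$ unless $(\ell-1) \mid k$, and $v_\ell(q^k - 1) = 1 + v_\ell(k/(\ell - 1))$ when $(\ell - 1) \mid k$. Summing over $k = 1, \dots, n$, applying Legendre's formula $v_\ell(m!) = \sum_{j \geq 1}\lfloor m/\ell^j\rfloor$, and using the identity $\lfloor\lfloor x/a\rfloor/b\rfloor = \lfloor x/(ab)\rfloor$ collapses the total to
\[
v_\ell\bigl(|\gl(n,\Ff_q)|\bigr) \;=\; \sum_{j \geq 0}\left\lfloor\frac{n}{\ell^j(\ell - 1)}\right\rfloor,
\]
which is precisely the exponent of $\ell$ in $M(n)$. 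Thus $v_\ell(|G|)$ is bounded by the $\ell$-part of $M(n)$ for every odd prime $\ell$.

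\textbf{Step 3 ($\ell = 2$ and the main obstacle).} The prime $\ell = 2$ is where I expect the main obstacle: $(\Zz/2\Zz)^\times$ is trivial, so the primitive-root trick does not apply, and the lifting-the-exponent formula at $\ell = 2$ carries a parity correction. Worse, reduction modulo any single odd $q$ yields a $2$-adic bound strictly larger than $v_2(M(n))$: already for $n = 2$ one checks that every odd $q$ gives $v_2(|\gl(2,\Ff_q)|) \geq 4 > 3 = v_2(M(2))$. A genuinely different argument is therefore needed at $\ell = 2$, for instance by exploiting that any finite $2$-subgroup of $\gl(n,\Zz)$ preserves a positive-definite integral quadratic form and so embeds into a real orthogonal group whose $2$-Sylow structure can be controlled via Clifford-algebra and spinor considerations, or by combining reductions modulo several auxiliary primes simultaneously. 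Multiplying the bounds obtained prime-by-prime from Step 2 and this separate treatment at $\ell = 2$ yields $|G| \mid M(n)$.
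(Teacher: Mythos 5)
The paper does not prove this statement at all---it is quoted verbatim from Minkowski and Guralnick--Lorenz---so there is no internal proof to compare against; your attempt must stand on its own, and it does not yet. The decisive gap is your Step 3: for the prime $\ell=2$ you correctly observe that reduction modulo a single odd prime $q$ can never give the sharp exponent $v_2(M(n))$ (your $n=2$ computation is right, since $v_2(|\GL(2,\Ff_q)|)=2v_2(q-1)+v_2(q+1)\ge 4>3$), but you then only gesture at possible fixes (``Clifford-algebra and spinor considerations,'' ``combining reductions modulo several auxiliary primes'') without carrying any of them out. This is precisely the part of Minkowski's theorem that requires a genuinely new ingredient: the classical route is to average an inner product so that a finite ($2$-)subgroup of $\GL(n,\Zz)$ preserves a positive-definite integral quadratic form, reduce modulo a suitable odd prime (e.g.\ $q\equiv\pm 3\pmod 8$) to land in an orthogonal group $O_n(\Ff_q)$, and compute the $2$-part of its order, which does match $\sum_{k\ge 0}\lfloor n/2^k\rfloor$; alternatively one follows Schur/Serre. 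Until that step is actually written, you have proved only that the odd part of $|G|$ divides the odd part of $M(n)$, not the theorem.

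Two smaller points. In Step 2 your choice of auxiliary prime is stated backwards: a prime $q$ that is a primitive root modulo $\ell$ but \emph{not} modulo $\ell^2$ has order $\ell-1$ in $(\Zz/\ell^2\Zz)^\times$, hence $v_\ell(q^{\ell-1}-1)\ge 2$, which is the opposite of the property $v_\ell(q^{\ell-1}-1)=1$ that your lifting-the-exponent computation uses; you want $q$ to be a primitive root modulo $\ell^2$ (Dirichlet then supplies such a prime in the residue class of any primitive root mod $\ell^2$). With that correction the valuation count $\sum_{k=1}^n v_\ell(q^k-1)=\lfloor n/(\ell-1)\rfloor+v_\ell(\lfloor n/(\ell-1)\rfloor!)=\sum_{j\ge 0}\lfloor n/\ell^j(\ell-1)\rfloor$ is correct. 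In Step 1, the torsion-freeness of the kernel of $\GL(n,\Zz)\to\GL(n,\Ff_q)$ does hold for odd $q$, but ``iterating drives $A$ to $0$'' should be replaced by the standard two-case argument (order a prime $\ell$; comparing valuations forces $\ell=q$, and then the term $\binom{q}{2}q^{2s}$, divisible by $q^{2s+1}$ for $q$ odd, yields the contradiction), and it is worth flagging that this is exactly where $q=2$ fails ($-I\equiv I\bmod 2$), which is consistent with your later difficulty at $\ell=2$.
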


\begin{lemma}\label{lem:bound}
  For any integer $\lambda \ge \lambda_Z$, the degree $[\k_Z : \k]$ is bounded above by $M(\lambda)$.
\end{lemma}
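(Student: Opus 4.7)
The plan is to produce a homomorphism from the absolute Galois group $G = \Gal(\bark/\k)$ to a finite matrix group whose order simultaneously bounds $[\k_Z : \k]$ from below and $M(\lambda)$ from above.

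First, I would observe that $G$ acts coordinatewise on $\bark^b$ by $\Zz$-linear maps and permutes the set of conjugates $\{\sigma p(Z) : \sigma \in G\}$ which by definition spans $\Lambda_Z$. This yields a $\Zz$-linear representation $\rho \colon G \to \gl(\Lambda_Z) \simeq \gl(\lambda_Z, \Zz)$. The field $\k_Z$ is a finite extension of $\k$, being generated by the finitely many algebraic numbers $p_{\omega_i}(Z)$ from Proposition~\ref{prop:algebraic_periods}, so the orbit of $p(Z)$ under $G$ has finite size $[\k_Z : \k]$ by orbit-stabilizer (the stabilizer being $\Gal(\bark/\k_Z)$). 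Since this orbit already spans $\Lambda_Z$, the image $\rho(G)$ embeds into the symmetric group on the orbit, and in particular $\rho(G)$ is a finite subgroup of $\gl(\lambda_Z, \Zz)$.

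Applying orbit-stabilizer a second time, now to the induced transitive action of $\rho(G)$ on the orbit of $p(Z)$, the stabilizer becomes $\rho(\Gal(\bark/\k_Z))$, whose index in $\rho(G)$ equals $[\k_Z : \k]$. Thus $[\k_Z : \k] \le |\rho(G)|$. For the complementary bound, since $\lambda \ge \lambda_Z$ we may embed $\gl(\lambda_Z, \Zz)$ block-diagonally into $\gl(\lambda, \Zz)$ by acting as the identity on the extra coordinates, exhibiting $\rho(G)$ as a finite subgroup of $\gl(\lambda, \Zz)$; Theorem~\ref{thm:minkowski} then gives $|\rho(G)| \le M(\lambda)$, and chaining the inequalities finishes the proof.

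The argument is essentially a packaging exercise, so there is no substantial obstacle. The only verification requiring care is that the componentwise Galois action on $\bark^b$ preserves $\Lambda_Z$, which is immediate from the fact that $G$ permutes its generating set. A small refinement of the argument identifies $\ker \rho$ with $\Gal(\bark/\tilde\k_Z)$ for $\tilde\k_Z$ the Galois closure of $\k_Z/\k$, giving the sharper equality $|\rho(G)| = [\tilde\k_Z : \k]$; this is not needed for the stated bound but may be useful if one later wants to estimate $[\tilde\k_Z : \k]$ rather than $[\k_Z : \k]$.
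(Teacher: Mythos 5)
Your proposal is correct and takes essentially the same route as the paper: both build the integral Galois representation on the rank-$\lambda_Z$ lattice spanned by the Galois conjugates of $p(Z)$, identify $[\k_Z:\k]$ with (a divisor of) the order of the image, and conclude by Minkowski's theorem. The only difference is packaging --- the paper passes to the finite group $\Gal(\sfK_Z/\k)$ of the Galois closure and uses faithfulness of the representation, whereas you work with the absolute Galois group and recover the same facts via the embedding of the image into the symmetric group on the orbit together with orbit--stabilizer; indeed your closing remark that $\ker\rho$ is the subgroup fixing the Galois closure of $\k_Z$ is exactly the paper's faithfulness statement.
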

\begin{proof}
  Let $\sfK_Z/\k$ be the Galois closure of $\k_Z$ in $\Cc$ and let $G=\Gal(\sfK_Z/\k)$. Note that $[\k_Z : \k]$ divides $[\sfK_Z : \k]=|G|$ and any bound for the latter will bound the former.
  
 Let $v_1,v_2,\ldots, v_{\lambda_Z} \in \sfK_Z^b$ be a $\Zz$-basis for the saturation of $\Lambda_Z$. We thus get the following Galois representation:
 \begin{align*}%\label{eq:galois_representation}
   h\colon \Gal(\sfK_Z/\k)&\to \GL (\lambda_Z,\Z), \\
   h(\sigma)\cdot \left( v_1, v_2 , \dots , v_{\lambda_Z}  \right) &= \left(  \sigma(v_1) , \sigma(v_2) , \dots , \sigma(v_{\lambda_Z}) \right) . 
\end{align*}
It is easy to see that $\sfK_Z$ is generated over $\k$ by all the entries of $v_i$'s, therefore the representation $h$ is faithful. Consequently, $|G|=|h(G)|$ is bounded above by $M(\lambda_Z)$ as a result of Theorem~\ref{thm:minkowski}. The observation $M(\lambda_Z) \le M(\lambda)$ finishes the proof.
\end{proof}

\begin{lemma}\label{lem:bound2}
  Let $X\subset \Pp^{n+1}_{\k}$ be a smooth projective variety, $\rho=\rk \Hdg^m(X)$ and $\delta \in \Hdg^m(X)$ an effective Hodge cycle. Then the field extension $\k_\delta / \k$ has degree at most $M(\rho-1)$. 
\end{lemma}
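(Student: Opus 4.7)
The plan is to apply Lemma~\ref{lem:bound} with $\lambda := \rho - 1$: since the function $M$ is non-decreasing (the block inclusion $\GL(n,\Zz)\hookrightarrow\GL(n+1,\Zz)$ shows that each prime-power exponent in the defining product for $M$ is non-decreasing in $n$), it will suffice to establish $\lambda_\delta \le \rho - 1$, after which Lemma~\ref{lem:bound} yields $[\k_\delta:\k]\le M(\rho-1)$ directly.

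The first step is to interpret each Galois conjugate of $p(\delta)$ as the period vector of an algebraic Hodge cycle on $X$. Fix an effective algebraic cycle $Z$ representing $\delta$, defined over a finite extension of~$\k$. For any $\sigma \in \Gal(\bark/\k)$ the conjugate cycle $\sigma Z$ is again an algebraic cycle on $X$, and its class $[\sigma Z] \in \H_m(X^{\an},\Zz)$ is a Hodge class, because the filtration $F^{m/2+1}\H^m_\dR(X/\k)$ is $\k$-rational and hence Galois-stable. Since the basis $\omega_1,\dots,\omega_b$ is itself $\k$-rational, the Galois equivariance of algebraic periods (the same compatibility underlying Proposition~\ref{prop:algebraic_periods}) gives
\[
  \sigma\bigl(p(\delta)\bigr) \;=\; p\bigl([\sigma Z]\bigr),
\]
so every element of $\Lambda_\delta$ lies in the image of the $\Qq$-linear period map
\[
  P \colon \Hdg^m(X)\otimes\Qq \longrightarrow \Cc^b, \qquad \eta \longmapsto p(\eta).
\]

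The final step is to exhibit a single non-zero class in $\ker P$, which will force $\dim_\Qq P(\Hdg^m(X)\otimes\Qq)\le \rho-1$. Let $\eta_0 \in \Hdg^m(X)$ be the Poincar\'e dual of $h^{\dim X - m/2}$, namely the homology class of an $(m/2)$-dimensional linear section of $X$; this is a non-zero algebraic Hodge class. For any $\omega \in \PH^m_\dR(X/\k)$ one computes
\[
  \int_{\eta_0}\omega \;=\; \int_X \omega\cup h^{\dim X - m/2} \;=\; 0,
\]
since $\omega\cup h^{\dim X - m + 1}$ already vanishes by the very definition of primitive cohomology, and for $m\ge 2$ one may factor $h^{\dim X - m/2}=h^{\dim X - m + 1}\cdot h^{m/2-1}$. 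Hence $\eta_0 \in \ker P$, the image of $P$ has $\Qq$-dimension at most $\rho-1$, and the $\Zz$-submodule $\Lambda_\delta$ of this image has rank $\lambda_\delta \le \rho-1$. Combined with Lemma~\ref{lem:bound} this proves the lemma.

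The only non-routine ingredient is the Galois-equivariance identity $\sigma(p(\delta))=p([\sigma Z])$, a manifestation of the algebraic character of de~Rham cohomology over $\k$ and of the cycle class map; this is the same principle underlying Proposition~\ref{prop:algebraic_periods}. The remaining arguments are formal Hodge-theoretic bookkeeping, so no serious obstacle is expected.
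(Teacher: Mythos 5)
Your proof is correct and follows essentially the same route as the paper: you identify the Galois conjugates of the period vector with primitive periods of conjugate algebraic Hodge classes, observe that the hyperplane-power class pairs to zero with primitive cohomology so that the relevant lattice has rank at most $\rho-1$, and then invoke Lemma~\ref{lem:bound}. The paper phrases the dimension drop as projecting $\Hdg^m(X)$ onto the orthogonal complement of the polarization class (which leaves primitive periods unchanged), which is the same mechanism as your kernel argument with the linear-section class $\eta_0$.
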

\begin{proof}
  Let $h$ be the hyperplane class on $X$ and project $\Hdg^m(X)$ to $(h^m)^{\perp}$. The Galois orbit of the projection of $\delta$ has dimension at most $\rho-1$. The primitive periods of $\delta$ and of its projection are the same. Now apply Lemma~\ref{lem:bound}. 
\end{proof}

\begin{theorem}\label{thm:upper_bound}
  Let $(X,d_1,d_2)$ be as in Theorem~\ref{thm:linear_system} and let $\rho$ be the Picard number of $X$. Then $\pic(X)$ admits a $\Qq$-basis $[Z_i], i=1,2,..,\rho,$ where each $Z_i$ is defined over an extension of $\k$ of degree at most $M(\rho-1)$. If $d_1=d_2=1$ then a $\Zz$-basis of this form exists.
\end{theorem}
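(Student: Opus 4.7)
The strategy is to combine the field-of-definition bound of Lemma~\ref{lem:bound2} with the linear-equivalence descent of Theorem~\ref{thm:linear_system}. The hypothesis $\H^1(X,\co_X)=0$ implies that the cycle map $\pic(X/\Cc) \to \H^2(X^{\an},\Zz)$ is injective modulo torsion and hits precisely $\Hdg^1(X)$ by the Lefschetz $(1,1)$ theorem, so $\rk \Hdg^2(X) = \rho$ (using the Poincar\'e dual identification employed throughout the paper).

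First, I would choose an arbitrary $\Qq$-basis of $\pic(X/\Cc)\otimes\Qq$ represented by honest divisors $W_1,\dots,W_\rho$ on $X_{\Cc}$. For each $i$, the class $[W_i]$ lies in $\Hdg^2(X)$, which has rank $\rho$, so Lemma~\ref{lem:bound2} gives
\[
  [\k_{W_i}:\k] \;\le\; M(\rho-1).
\]
Note that $\k_{W_i}$ is defined entirely by the periods of $W_i$, so passing from $W_i$ to a linearly equivalent representative does not change this field.

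Next, I apply Theorem~\ref{thm:linear_system}(\ref{item:integral_equivalence-2}) to each $W_i$: the divisor $d_1 d_2 W_i$ is linearly equivalent to a divisor $Z_i$ defined over $\k_{W_i}$. Since $[Z_i] = d_1 d_2 \,[W_i]$ in $\pic(X)$ and $d_1 d_2 \neq 0$, the family $\{[Z_i]\}_{i=1}^{\rho}$ remains a $\Qq$-basis of $\pic(X/\Cc)\otimes\Qq$, and by construction each $Z_i$ is defined over an extension of $\k$ of degree at most $M(\rho-1)$. This proves the first statement.

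For the second statement, assume $d_1=d_2=1$ and instead start from a genuine $\Zz$-basis $W_1,\dots,W_\rho$ of $\pic(X/\Cc)$ modulo torsion. Theorem~\ref{thm:linear_system}(\ref{item:integral_equivalence-2}) then produces $Z_i$ linearly equivalent to $W_i$ itself (not just to a multiple) and defined over $\k_{W_i}$, so $[Z_i]=[W_i]$ in $\pic(X)$ and we retain a $\Zz$-basis. The field bound is again supplied by Lemma~\ref{lem:bound2}. There is no real obstacle here since both ingredients are in place; the only point requiring a little care is making sure the field of definition is controlled by the periods of the original $W_i$ (equivalently, of its linear-equivalence class), which is immediate because $\k_{W_i}$ is defined in terms of cohomological period integrals that only see the class $[W_i]$.
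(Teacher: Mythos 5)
Your proposal is correct and follows essentially the same route as the paper's proof: choose a $\Qq$-basis (an integral basis when $d_1=d_2=1$), invoke Theorem~\ref{thm:linear_system} to replace $d_1d_2$ times each class by a divisor defined over the period field, and bound that field's degree by Lemma~\ref{lem:bound2}. Your added remarks (that $\k_{W_i}$ depends only on the class $[W_i]$, and the Lefschetz $(1,1)$ identification $\rk\Hdg^2(X)=\rho$) only make explicit what the paper leaves implicit.
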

\begin{proof}
  Let $Y_1,\dots,Y_\rho \in \pic(X_{\Cc})$ be a $\Qq$-basis. Using Theorem~\ref{thm:linear_system} we can find divisors $Z_1,\dots,Z_\rho$ where $Z_i$ is defined over $\k_{Y_i}$ and $[Z_i] = d_1d_2[Y_i]$. By Lemma~\ref{lem:bound2} we conclude $[\k_{Y_i}:\k] \le M(\rho-1)$. If $d_1=d_2=1$ then begin with an integral basis $\{Y_i\}_{i=1}^{\rho} \subset \pic(X_\Cc)$.
\end{proof}

\begin{remark}
  The values of $M(n)$ for $n=1,\dots,20$ are listed below:
  \begin{multline*}
    2, 24, 48, 5760, 11520, 2903040, 5806080, 1393459200, 2786918400,\\
    367873228800, 735746457600, 24103053950976000, 48206107901952000,\\
    578473294823424000, 1156946589646848000, 9440684171518279680000, \\
    18881368343036559360000, 271211974879377138647040000, \\
    542423949758754277294080000, 3579998068407778230140928000000.
  \end{multline*}
\end{remark}

\begin{proposition}\label{prop:rk2_surface}
  If $X\subset\ppp_{\k}$ is a smooth surface of degree $d$ with Picard number $2$ then there exists a curve $C \subset X$ defined over a field extension $\k'/\k$ of degree at most $2d$ such that $[C]$ generates $\pic(X)$ together with the hyperplane class. 
\end{proposition}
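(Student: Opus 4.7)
The plan is to apply Theorem~\ref{thm:linear_system} in the favorable setting of a smooth projective surface in $\ppp_\k$. First I would verify its hypotheses: the Lefschetz hyperplane theorem gives $\H^1(X,\O_X)=0$ and that $\H^2(X^{\an},\Zz)$ is torsion free (so $d_1=1$); intersecting $X$ with a $\k$-rational line of $\ppp_\k$ yields $d$ points whose residue fields have degree at most $d$ over $\k$, so $X$ admits a rational point over some extension $\sfK/\k$ of degree at most $d$, giving $d_2\le d$.

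Since $\pic(X)\cong \Zz^2$ has rank $2$, I would choose a class $[Y]\in\pic(X)$ such that $[Y]$ together with $h$ spans $\pic(X)\otimes\Qq$. After replacing $Y$ by $Y+Nh$ with $N\gg 0$, I may assume $Y$ is very ample, hence effective, without disturbing the generation property (since $h$ is defined over $\k$). Lemma~\ref{lem:bound2} applied with $\rho=2$ then bounds $[\k_Y:\k]\le M(1)=2$.

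Set $\k':=\k_Y\cdot \sfK$, so $[\k':\k]\le 2d$ and $X$ has a $\k'$-rational point. By the surjectivity of $\pic(X/\k')\to \pic(X/\bark)^{\Gal(\bark/\k')}$ recalled in the proof of Theorem~\ref{thm:linear_system}, the $\Gal(\bark/\k_Y)$-invariant (hence $\Gal(\bark/\k')$-invariant) class $[Y]$ lifts to a genuine line bundle $\cL$ on $X_{\k'}$. Flat base change identifies $\H^0(X_{\k'},\cL)\otimes_{\k'}\Cc$ with $\H^0(X_\Cc,\O_X(Y))$, which is nonzero since $Y$ is effective; any nonzero section over $\k'$ cuts out an effective divisor $C\subset X_{\k'}$ with $[C]=[Y]$. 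Consequently $[C]$ together with $h$ generates $\pic(X)$, and $C$ is defined over $\k'$ with $[\k':\k]\le 2d$, as required.

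The main conceptual obstacle is lifting the Galois-invariant Picard class $[Y]$ on $X_{\bark}$ to an honest line bundle over a small extension of $\k_Y$. The Brauer-type obstruction to this lift is what forces the multiplicative factor $d$ in the field of definition of $C$: it is cleared precisely by passing to a field over which $X$ has a rational point, which is where the parameter $d_2$ from Theorem~\ref{thm:linear_system} enters the bound.
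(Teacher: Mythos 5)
Your overall route coincides with the paper's: check the hypotheses of Theorem~\ref{thm:linear_system} for a smooth surface in $\ppp_\k$ (so $d_1=1$, $d_2\le d$), use Lemma~\ref{lem:bound2} with $\rho=2$ to get $[\k_Y:\k]\le M(1)=2$, and pass to the compositum with a degree-$\le d$ field over which $X$ has a rational point. In fact your unwinding of the theorem's proof over $\k'=\k_Y\cdot\sfK$ --- lifting the Galois-invariant class to an honest line bundle using the surjectivity of $\pic(X/\k')\to\pic(X/\bark)^{\Gal(\bark/\k')}$, then taking a section --- is a careful rendering of the paper's rather terse final sentence, and it correctly avoids the trap of quoting Item~1 of the theorem with $d_2=d$, which would only produce a divisor in the class $d[Y]$ rather than $[Y]$.

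There is, however, one genuine gap, and it sits at the very first choice. You take $[Y]$ only so that $[Y]$ and $h$ span $\pic(X)\otimes\Qq$, yet at the end you conclude that $[C]=[Y]$ together with $h$ generates $\pic(X)$. From a rational spanning choice you only get a finite-index subgroup of $\pic(X)$, and integral generation is exactly what the proposition (and the paper's proof) is after. The missing step is the paper's opening move: $h$ is indivisible in $\pic(X)\simeq\Zz^2$ (for instance because $\H_2(X,\Zz)\to\H_2(\ppp,\Zz)$ is surjective by the Lefschetz hyperplane theorem, so some class meets $h$ in degree $1$), hence writing $h=a_1v_1+a_2v_2$ in a $\Zz$-basis forces $\gcd(a_1,a_2)=1$ and one can complete $h$ to a $\Zz$-basis $(h,w)$ of $\pic(X)$. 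Running your argument with $Y=w+Nh$ --- still part of a $\Zz$-basis, still effective for $N\gg0$, and with the same period field since $h$ is defined over $\k$ --- repairs the conclusion and gives the integral statement; without it, your final sentence does not follow from what precedes it.
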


\begin{proof}
  Let $v_1,v_2 \in \pic(X) \simeq \Zz^2$ be a basis and let $h \in \pic(X)$ be the hyperplane class. Writing $h=a_1v_1+a_2v_2$ we note $\gcd(a_1,a_2)=1$ since $h$ is indivisible. Take $b_1,b_2 \in \Zz$ such that $b_1a_1+b_2a_2=1$ and let $w=b_1v_1+b_2v_2$ so that $h,w$ is a basis for $\pic(X)$. By replacing $w$ with $w+ch$ for $c \gg 1$ we may assume $w$ is effective. Lemma~\ref{lem:bound2} implies that the field $\k_w$ is at most a quadratic extension of $\k$. If $X(\k) \neq \emptyset$ we can apply Theorem~\ref{thm:linear_system} for the existence of a curve $C/\k_w$ representing $w$.  Otherwise, intersecting $X$ with a line we see that $X$ admits a point over an extension of $\k$ of degree at most~$d$.
\end{proof}

\printbibliography
\end{document}